\def\dj{d\kern-0.4em\char"16\kern-0.1em}
 \newtheorem{thm}{Theorem}
 \newtheorem{cor}[thm]{Corollary}
 \newtheorem{lem}[thm]{Lemma}
 \newtheorem{prop}[thm]{Proposition}
\theoremstyle{definition}
 \newtheorem{defn}[thm]{Definition}
\newtheorem{exm}[thm]{Example}
 \newtheorem{rem}[thm]{Remark}
\title{The cubical matching complex revisited}
\author{Du\v sko Joji\' c}
\address{University of Banja Luka, Faculty of Science, \\Mladena
Stojanovi\' ca 2, 78 000 Banja Luka, Bosnia and Herzegovina}
\email{ducci68@teol.net}
\keywords{domino tilings, independence complexes, matching,
cubical complexes}               % Keywords
\begin{document}
\maketitle

\begin{abstract}

Ehrenborg noted that all tilings of a bipartite planar graph are
encoded by its cubical matching complex and claimed that this
complex is collapsible. We point out to an oversight in his proof
and explain why these complexes can be the union of collapsible
complexes. Also, we prove that all links in these complexes are
suspensions up to homotopy. Furthermore, we extend the definition
of a cubical matching complex to planar graphs that are not
necessarily bipartite, and show that these complexes are either
contractible or a disjoint union of contractible complexes.

For a simple connected region that can be tiled with dominoes
($2\times 1$ and $1\times 2$) and $2\times 2$ squares, let $f_i$
denote the number of tilings with exactly $i$ squares. We prove
that $f_0-f_1+f_2-f_3+\cdots=1$ (established by Ehrenborg) is the
only linear relation for the numbers $f_i$.

\end{abstract}

  %% End of the front matter

%% Your article

\section{Introduction}
Let $G=(V,E)$ be a bipartite planar graph that allows a perfect
matching. Assume that $G$ is embedded in a plane. An
\textit{elementary cycle} of $G$ is a cycle that encircles a
single region $R$ different than outer region $R^*$. Throughout
this paper, we identify an elementary cycle with the region it
encircles as well as with its set of vertices or edges.

A \textit{tiling} of $G$ is a partition of the vertex set $V$ into
 disjoint blocks of the following two types:
\begin{itemize}
\item[(1)] an edge $\{x,y\}$ of $G$; or
 \item[(2)] an elementary cycle $R$ (the set of vertices
 of $R$).
\end{itemize}

The set of all tilings of $G$ form a cubical complex
$\mathcal{C}(G)$ (called the \textit{cubical matching complex})
defined by Ehrenborg in \cite{EhrCUB}. Note that $\mathcal{C}(G)$
depends not only on $G$, but also on the choice of the embedding
of that graph in the plane.

A face $F$ of $\mathcal{C}(G)$ has the form $F=M_F \cup
C_F=(M_F,C_F),$ where $C_F$ is a collection
$C_F=\{R_1,R_2,\ldots,R_t\}$ of vertex-disjoint elementary cycles
of $G$, and $M_F$ is a perfect matching on $G\setminus
\big(R_1\cup R_2 \cup \cdots\cup R_t\big)$. The dimension of $F$
is $|C_F|$, and the vertices of $\mathcal{C}(G)$ are all perfect
matchings of $G$.

 All tilings of $G$ covered by $F=(M_F,C_F)$ can be
obtained by deleting an elementary cycle $R$ from $C_F$, and
adding every other edge of $R$ into $M_F$ (there are two
possibilities to do this). Therefore, for two faces $F_1=(M_{F_1},
C_{F_1})$ and $F_2=(M_{F_2}, C_{F_2})$, we have that
\begin{equation}\label{E:facerelation}
\big(F_1\subset F_2\big) \Longleftrightarrow \Big(C_{F_1}\subset
C_{F_2}\textrm{ and }
 M_{F_1}\supset M_{F_2}\Big).
\end{equation}

Let $G^\circ$ denote the weak dual graph
 of a planar graph
$G$. The vertices of $G^\circ$ are all bounded regions of $G$, and
two regions that share a common edge are adjacent in $G^\circ$.

The \textit{independence complex} of a graph $H$ is a simplicial
complex $I(H)$ whose faces are the independent subsets of vertices
of $H$. Note that for any face $F= (M_F,C_F)$ of $\mathcal{C}(G)$,
the set $C_F$ contains independent vertices of $G^\circ$, i.e.,
$C_F$ is a face of $I(G^\circ)$.

At the first sight, the complex $\mathcal{C}(G)$ is related with
the independence complex $I(G^\circ)$ of its weak dual graph.

\begin{figure}[htbp]
\begin{center}
\begin{tikzpicture}[scale=.96]{center}
\draw (0,0) -- (1,0) -- (1,1) -- (0,1) -- (0,0); \draw (2,0) --
(3,0) -- (3,1) -- (2,1) -- (2,0); \draw (1,1) -- (1.5,1.7) --
(2,1);\draw (1,0) -- (1.5,-0.7) -- (2,0);

 \draw [fill] (0,0)
circle [radius=0.051];\draw [fill] (0,1) circle [radius=0.051];
\draw [fill] (1,0) circle [radius=0.051]; \draw [fill] (1,1)
circle [radius=0.051];

 \draw [fill] (2,0)
circle [radius=0.051];\draw [fill] (2,1) circle [radius=0.051];
\draw [fill] (3,0) circle [radius=0.051]; \draw [fill] (3,1)
circle [radius=0.051];

 \draw [fill] (1.5,1.7)
circle [radius=0.051];\draw [fill] (1.5,-0.7) circle
[radius=0.051]; \node at (.5,.5) {A};\node at (1.5,.5) {B};\node
at (2.5,.5) {C};

\node at (1.7,-1.2) {$G_1$};\node at (5.6,-1.2) {$G_2$};\node at
(10,-1.2) {$G_3$};

\node at (1.7,-3.2) {$\mathcal{C}(G_1)$};\node at (6.68,-3.2)
{$\mathcal{C}(G_2)$};\node at (10.7,-3.2) {$\mathcal{C}(G_3)$};

\draw (4,0) -- (5,0) -- (5,1) -- (4,1) -- (4,0); \draw (6,0) --
(7,0) -- (7,1) -- (6,1) -- (6,0); \draw (5,1) -- (6,1);\draw (5,0)
-- (5.33,-0.7) -- (5.66,-0.7)--(6,0);

 \draw [fill] (4,0)
circle [radius=0.051];\draw [fill] (4,1) circle [radius=0.051];
\draw [fill] (5,0) circle [radius=0.051]; \draw [fill] (5,1)
circle [radius=0.051];

 \draw [fill] (6,0)
circle [radius=0.051];\draw [fill] (6,1) circle [radius=0.051];
\draw [fill] (7,0) circle [radius=0.051]; \draw [fill] (7,1)
circle [radius=0.051];

 \draw [fill] (5.33,-.7)
circle [radius=0.051];\draw [fill] (5.66,-0.7) circle
[radius=0.051]; \node at (4.5,.5) {A};\node at (5.5,.5) {B};\node
at (6.5,.5) {C};

\draw (8,0.5) -- (8.7,1.2) -- (9.4,0.5) -- (8.7,-0.2) -- (8,0.5);
\draw (8.7,1.2) -- (10.7,1.2); \draw (10.7,-0.2)--(8.7,-0.2);

 \draw (10,0.5) -- (10.7,1.2) -- (11.4,0.5) -- (10.7,-0.2) --
(10,0.5);

\draw [fill] (11.4,0.5) circle [radius=0.051];\draw [fill]
(10.7,-0.2) circle [radius=0.051];

 \draw [fill] (8,0.5)
circle [radius=0.051];\draw [fill] (8.7,1.2) circle
[radius=0.051]; \draw [fill] (9.4,0.5) circle [radius=0.051];
\draw [fill] (8.7,-0.2) circle [radius=0.051];

 \draw [fill] (10,0.5)
circle [radius=0.051];\draw [fill] (10.7,1.2) circle
[radius=0.051]; \node at (8.75,.5) {$A$};\node at (9.65,.25)
{$B$};\node at (10.75,.5) {$C$};

\draw [ultra thick] (0.2,-3) -- (1.,-4)--(2.,-4)--(2.8,-3); \draw
[ultra thick] [fill=gray!19] (4.5,-2.5) rectangle (6,-4);,\draw
[ultra thick](6,-4)--(7.2,-4); \node at (5.3,-3.3) {$AC$};\node at
(6.6,-4.27) {$B$};

\node at (0.34,-3.77) {$A$};\node at (1.5,-4.27) {$B$};\node at
(2.6,-3.77) {$C$}; \draw [ultra thick] [fill=gray!19] (8.5,-2.5)
rectangle (10,-4);\node at (9.3,-3.3) {$AC$};
\end{tikzpicture}
\caption{The three graphs with the same weak dual, but different
cubical matching complexes.} \label{F:istiIdrugiC}
\end{center}
\end{figure}
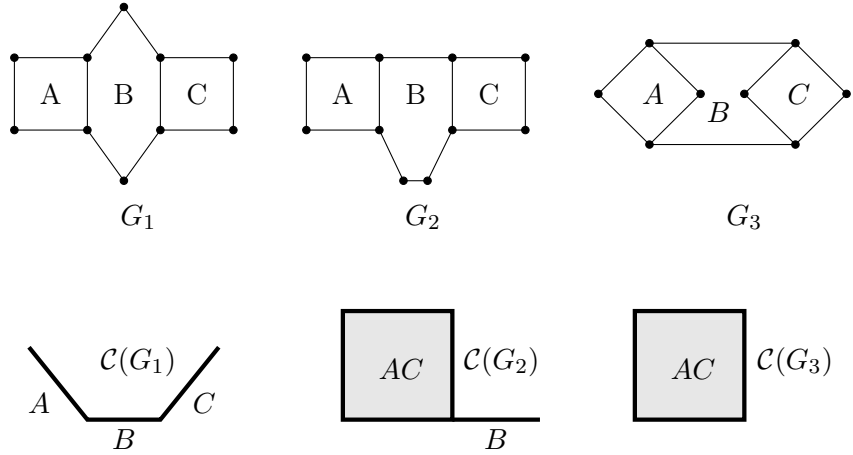
However, Figure \ref{F:istiIdrugiC} shows the three graphs with
the same weak dual but different cubical matching complexes. The
facets of the complexes on Figure \ref{F:istiIdrugiC} are labeled
by corresponding subsets of pairwise disjoint elementary regions.
\begin{exm}Let
$\mathcal{L}_n$ and $\mathcal{C}_n$ denote the independence
complexes of $P_n$ and $C_n$ (the path and cycle with $n$
vertices) respectively. The homotopy types of these complexes are
determined by Kozlov in \cite{Koz}:

$$\mathcal{L}_n \simeq\left\{%
\begin{array}{ll}
 \textrm{ a point } , & \hbox{if $n=3k+1$;} \\
    S^{\lfloor\frac{n-1}{3}\rfloor}, & \hbox{otherwise.} \\
\end{array}%
\right.   \mathcal{C}_n \simeq\left\{%
\begin{array}{ll}
    S^{k-1}, & \hbox{if $n=3k\pm 1$;} \\
S^{k-1}\vee S^{k-1}, & \hbox{if $n=3k$.} \\
\end{array}%
\right.  $$ We will use these complexes later, see Corollary
\ref{C:allLorC} and Remark \ref{R:OQ}. More details about
combinatorial and topological properties of $\mathcal{L}_n$ and
$\mathcal{C}_n$ (and about the independence complexes in general),
an interested reader can find in \cite{Eh-He}, \cite{Eng} and
\cite{Jonss}.
\end{exm}

There are some cubical complexes that cannot be realized as
subcomplexes of a $d$-cube $C^d=[0,1]^d$, see Chapter $4$ of
\cite{tortop}.
 \begin{prop}\label{P:cord} Let $G$ be a bipartite planar graph
 that has a
 perfect matching. If $G$ has $d$ elementary regions, then its
 cubical matching complex $\mathcal{C}(G)$
 can be embedded into $C^d$.
 \end{prop}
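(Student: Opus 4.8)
The plan is to construct an explicit combinatorial map from the faces of $\mathcal{C}(G)$ into the faces of the $d$-cube $C^d=[0,1]^d$ and verify it is a cubical embedding. Label the $d$ elementary regions of $G$ as $R_1,R_2,\ldots,R_d$, so that each axis direction of $C^d$ corresponds to one elementary region. The key observation to exploit is the face structure described in the excerpt: a face $F=(M_F,C_F)$ is determined by a set $C_F=\{R_{i_1},\ldots,R_{i_t}\}$ of vertex-disjoint elementary cycles together with a perfect matching $M_F$ on the complement, and its dimension equals $|C_F|=t$. This immediately suggests that the ``cube directions'' used by a face should be exactly the indices of the regions in $C_F$.

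First I would fix, for each vertex (perfect matching) $M$ of $\mathcal{C}(G)$, the assignment of a vertex of $C^d$. Here the natural idea is: for each elementary region $R_j$, the two ways of filling $R_j$ with alternating edges of $M$ give a local binary choice, which I would record as the $j$-th coordinate being $0$ or $1$. The subtlety is that a given perfect matching $M$ need not use edges of every elementary cycle in an ``alternating'' fashion, so I must make a consistent global rule: I would orient/two-color each elementary cycle (using that $G$ is bipartite, so each elementary cycle has even length and admits exactly two perfect matchings of its boundary) and declare one of the two boundary matchings to be the $0$-state and the other the $1$-state. A perfect matching $M$ then receives coordinate $j$ equal to whichever of these two states its restriction to $R_j$ agrees with, and for regions not participating we extend arbitrarily but consistently; the point is that a face $F$ with $C_F=\{R_{i_1},\ldots,R_{i_t}\}$ has all its vertices agreeing on every coordinate outside $\{i_1,\ldots,i_t\}$ while ranging freely over $\{0,1\}$ in those $t$ coordinates, so $F$ maps onto a genuine $t$-dimensional subcube of $C^d$.

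Next I would check that this map is an embedding, i.e., injective on faces and compatible with the face relation \eqref{E:facerelation}. Compatibility is the easy direction: if $F_1\subset F_2$ then $C_{F_1}\subset C_{F_2}$, so the free coordinates of $F_1$ form a subset of those of $F_2$, and the images are nested subcubes. Injectivity is where the real work lies, and I expect it to be the main obstacle: two distinct faces could in principle collapse to the same subcube if the coordinate rule failed to separate matchings that differ only on regions lying in $C_F$ versus regions outside it. To rule this out I would argue that the pair (set of free coordinates, fixed-coordinate pattern) recovers $(C_F,M_F)$ uniquely: the free coordinates recover $C_F$ by construction, and once $C_F$ is known, the fixed coordinates determine the boundary-matching state of $M_F$ on every region outside $C_F$, which—together with the fact that $M_F$ is a perfect matching of the complement $G\setminus\bigcup C_F$—pins down $M_F$ completely. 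The delicate point to verify carefully is that the edges of $M_F$ not lying on any elementary cycle boundary are forced, so that no two distinct matchings of the complement share the same coordinate signature; this is precisely where one uses that the elementary regions tile the bounded part of $G$ and that a perfect matching is rigidly recoverable from its interaction with each region.
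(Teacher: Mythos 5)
Your overall strategy (coordinates indexed by the $d$ elementary regions, with each face $(M_F,C_F)$ mapping to a subcube whose free directions are exactly the regions in $C_F$) is the same as the paper's, but your coordinate rule has a genuine gap that the paper's construction is specifically designed to avoid. You define the $j$-th coordinate of a perfect matching $M$ by asking which of the two alternating boundary matchings of $R_j$ the restriction of $M$ agrees with. For most pairs $(M,R_j)$ this question has no answer: a perfect matching typically meets the boundary of a region in a non-alternating way, since some vertices of $R_j$ are matched to vertices outside $R_j$. Already for a $2\times 3$ grid graph (two unit squares $R_1,R_2$), the matching consisting of the two horizontal edges on the left square plus the rightmost vertical edge is alternating on $R_1$ but uses exactly one boundary edge of $R_2$, so its second coordinate is undefined. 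Your fallback --- ``extend arbitrarily but consistently'' --- is not a construction: consistency is precisely what has to be produced. The property you need is that $M$ and $M\triangle R_j$ receive coordinate vectors differing exactly in position $j$; but flipping $R_j$ changes the restriction of $M$ to every region sharing an edge with $R_j$, so under an arbitrary local assignment the neighboring coordinates can change as well. In the example above, the edge of $\mathcal{C}(G)$ joining the all-verticals matching to the matching just described flips $R_1$, which forces the (undefined) second coordinate of the latter to equal that of the former; nothing in your rule guarantees this. Your injectivity argument inherits the same defect, since it reads off ``the boundary-matching state of $M_F$ on every region outside $C_F$,'' which in general does not exist.

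The fix, and this is what the paper does, is to make the rule global rather than local: fix a reference perfect matching $M_0$, observe that $M\triangle M_0$ is a disjoint union of cycles, and set the $j$-th coordinate of $M$ to be $1$ if and only if $R_j$ is enclosed by an odd number of these cycles. This is well-defined for every $M$ and every $j$, and the one-coordinate-flip property holds automatically: replacing $M$ by $M\triangle R_j$ changes the cycle collection $M\triangle M_0$ by the boundary of $R_j$, which changes the mod-$2$ count of enclosing cycles exactly over the region $R_j$ and nowhere else. With that rule, each face $(M_F,C_F)$ lands on the subcube you intended, distinct matchings receive distinct vertices, and the nesting of subcubes compatible with \eqref{E:facerelation} follows exactly as in the ``easy direction'' of your argument.
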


\begin{proof}
  We use an idea from \cite{Propp} to describe the coordinates of
  vertices of $\mathcal{C}(G)$ explicitly.
Let $R_1,R_2,\ldots,R_d$ be a fixed linear order of elementary
regions of $G$. We choose an arbitrary perfect matching $M_0$ of
$G$ (a vertex of $\mathcal{C}(G)$) to be the origin
$\mathbf{0}=(0,0,\ldots,0)$ in $\mathbb{R}^d$. For another vertex
$M$ of $\mathcal{C}(G)$, we consider the symmetric difference $
M\triangle M_0$. Note that $ M \triangle M_0$ is a disjoint union
of cycles. For a given perfect matching $M$ of $G$, we assign the
vertex $V_M=(x_1,\ldots ,x_d)$ of $C^d$, where
$$x_i=\left\{%
\begin{array}{ll}
    1, & \hbox{if $R_i$ is contained into
    an odd number of cycles of $ M\triangle M_0$;} \\
    0, & \hbox{otherwise.} \\
\end{array}%
\right.    $$

If $M'$ and $M''$ are two perfect matchings of $G$ such that
$M'\triangle M''=R_j$ (meaning that these two matchings differ
just on an elementary region $R_j$), then their corresponding
vertices $V_{M'}$ and $V_{M''}$ of $C^d$ differ only at the $j$-th
coordinate.

Therefore, the face $F=(M_F,C_F)$ is embedded in $C^d$ as the
convex hull of its $2^{|C(F)|}$ vertices.

\end{proof}

\section{The local structure of $\mathcal{C}(G)$}
The \textit{star} of a face $F$ in a cubical complex $\mathcal{C}$
is the set of all faces of $\mathcal{C}$ that contain $F$
 $$star(F)=\{F'\in
\mathcal{C}: F\subset F'\}.$$ The \textit{link} of a vertex $v$ in
a cubical complex $\mathcal{C}$ is the simplicial complex
$link_\mathcal{C}(v)$ that can be realized in $\mathcal{C}$ as a
``small sphere`` around the vertex $v$. More formally, the
vertices of $link_\mathcal{C}(v)$ are the edges of $\mathcal{C}$
containing $v$. A subset of vertices of $link_\mathcal{C}(v)$ is a
face of $link_\mathcal{C}(v)$ if and only if the corresponding
edges belong to a same face of $\mathcal{C}$.

 The \textit{link} of a face $F$ in
a cubical complex $\mathcal{C}$ is defined in a similar way. The
set of vertices of $link_\mathcal{C}(F)$ is $$\{F'\in \mathcal{C}:
F\subset F'\textrm{ and }dim\, F'=1+dim\, F\},$$ and a subset $A$
of the set of vertices is a face of $link_\mathcal{C}(F)$ if and
only if all elements of $A$ are contained in a same face of
$\mathcal{C}$.

Ehrenborg investigated the links of the cubical complexes
associated to tilings of a region by dominos or lozenges.

Here we describe the links in the cubical matching complex
$\mathcal{C}(G)$ for any bipartite planar graph $G$. For a face
$F=(M_F,C_F)$ of $\mathcal{C}(G)$, let $\mathcal{R}_F$ denote the
set of all elementary regions of $G$ for which every second edge
is contained in $M_F$. Further, let $G_F$ denote the subgraph of
the weak dual graph $G^\circ$ spanned with the regions from
$\mathcal{R}_F$.

From the definition of the link in a cubical complex and
(\ref{E:facerelation}), we obtain the next statement.
\begin{prop}
For any face $F=(M_F,C_F)$ of $\mathcal{C}(G)$ we have that
$$link_\mathcal{C}(F)\cong I(G_F).$$
\end{prop}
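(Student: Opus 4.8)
The plan is to exhibit an explicit bijection between the vertices of $link_\mathcal{C}(F)$ and the vertices of $I(G_F)$, and then check that this bijection carries faces to faces in both directions. Recall that a vertex of $link_\mathcal{C}(F)$ is, by definition, a face $F'$ with $F\subset F'$ and $\dim F' = 1 + \dim F$; by the characterization \eqref{E:facerelation}, such an $F'$ is obtained from $F=(M_F,C_F)$ by adjoining a single new elementary region to $C_F$. The first step is to make precise which regions may be adjoined: if $F'=(M_{F'},C_{F'})$ with $C_{F'}=C_F\cup\{R\}$ and $M_{F'}\subset M_F$, then $R$ must be vertex-disjoint from the regions already in $C_F$, and moreover the matching $M_F$ must restrict to a perfect matching on $R$ using every second edge of $R$, so that removing those edges frees up exactly the vertices of $R$. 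This is precisely the condition that $R\in\mathcal{R}_F$ \emph{and} $R$ is a vertex of $G_F$ not adjacent to (in fact disjoint from) the cycles in $C_F$.

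The second step is to verify that this correspondence $F'\mapsto R$ is a bijection onto the vertex set of $I(G_F)$, which is simply $\mathcal{R}_F$ itself. For surjectivity, given any $R\in\mathcal{R}_F$ we delete the $\lfloor |R|/2\rfloor$ matched edges of $R$ from $M_F$ to form $M_{F'}$ and set $C_{F'}=C_F\cup\{R\}$; one checks this yields a legitimate face of $\mathcal{C}(G)$ of dimension $1+\dim F$ containing $F$. Injectivity is immediate since $F'$ is determined by the single region $R=C_{F'}\setminus C_F$. Here I would note that the regions in $C_F$ themselves do not belong to $\mathcal{R}_F$ — their edges are \emph{not} in $M_F$ — so no conflict arises, and the vertex set of $G_F$ is exactly the set of admissible $R$.

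The third and decisive step is to match up the higher faces. A subset $A=\{R_1,\dots,R_k\}$ of vertices of $link_\mathcal{C}(F)$ spans a face precisely when the corresponding one-dimensional extensions $F_1,\dots,F_k$ all lie in a common face $F''$ of $\mathcal{C}(G)$; by \eqref{E:facerelation} this happens iff $C_{F''}=C_F\cup\{R_1,\dots,R_k\}$ is a valid collection of \emph{vertex-disjoint} elementary regions with $M_{F''}=M_F\setminus(\text{edges of }R_1,\dots,R_k)$ a perfect matching on the complement. Since each $R_i\in\mathcal{R}_F$ already has its edges deletable, the only extra requirement is that the $R_i$ be pairwise vertex-disjoint, which in $G_F\subseteq G^\circ$ means pairwise non-adjacent — exactly the condition that $\{R_1,\dots,R_k\}$ be an independent set, i.e.\ a face of $I(G_F)$. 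Thus the bijection on vertices extends to a simplicial isomorphism.

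I expect the main obstacle to be a careful bookkeeping subtlety rather than a deep difficulty: one must confirm that vertex-disjointness of the regions $R_i$ is genuinely equivalent to non-adjacency in the weak dual $G^\circ$ restricted to $G_F$. In principle two elementary regions could share a vertex without sharing an edge, in which case they would be disjoint in $G^\circ$ yet not vertex-disjoint; I would check that for the relevant regions in $\mathcal{R}_F$ this does not cause the correspondence to break, since the matching condition on $M_F$ forces the shared-vertex situation to be detected as an overlap (both regions would claim the same matched vertex). Pinning down this equivalence cleanly is the crux; once it is settled, the simplicial isomorphism $link_\mathcal{C}(F)\cong I(G_F)$ follows formally from \eqref{E:facerelation}.
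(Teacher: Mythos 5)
Your proposal is correct and follows essentially the same route as the paper, which offers no written argument at all beyond asserting that the isomorphism follows from the definition of the link and the face relation (\ref{E:facerelation}); your write-up simply fills in the details that the paper leaves to the reader. In particular, you correctly identify and resolve the one genuine subtlety: for regions in $\mathcal{R}_F$, sharing a vertex forces sharing the matched edge at that vertex (since $M_F$ is a matching), so vertex-disjointness coincides with non-adjacency in $G_F$, which is exactly what makes the face-to-face correspondence work.
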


The above proposition explains the appearance of complexes
$\mathcal{L}_n$ and $\mathcal{C}_n$ as the links in cubical the
matching complexes, see Theorem 3.3 and Section 4 in
\cite{EhrCUB}.

Assume that all elementary regions of $G$ are quadrilaterals. In
that case, for any face $F$ of $\mathcal{C}(G)$, the degree of a
vertex in $G_F$ is at most two. Therefore, $G_F$ is a union of
paths and cycles.
\begin{cor}\label{C:allLorC}
If all elementary regions of $G$ are quadrilaterals, then
$link_\mathcal{C}(F)$ is a join of complexes $\mathcal{L}_p$ and
$\mathcal{C}_{2q}$.
\end{cor}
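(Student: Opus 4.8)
The plan is to read off the result from the identification $link_{\mathcal{C}}(F)\cong I(G_F)$ proved in the preceding proposition, together with the structural fact already recorded, that when all elementary regions are quadrilaterals the graph $G_F$ is a disjoint union of paths and cycles. Only two further ingredients are needed: the standard behaviour of independence complexes under disjoint unions, and a parity argument identifying the length of each cyclic component. The first is routine; the second is where the real content sits.

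First I would invoke the elementary join formula for independence complexes: if a graph $H$ decomposes as a disjoint union $H=H_1\sqcup H_2$, then $I(H)=I(H_1)*I(H_2)$, since an independent set of $H$ is an arbitrary pair consisting of an independent set of $H_1$ and an independent set of $H_2$ with no edges between the two parts. Decomposing $G_F$ into its path components $P_{p_1},\dots,P_{p_s}$ and cycle components $C_{n_1},\dots,C_{n_r}$ and iterating this identity yields
\[
 link_{\mathcal{C}}(F)\cong I(G_F)=\mathcal{L}_{p_1}*\cdots*\mathcal{L}_{p_s}*\mathcal{C}_{n_1}*\cdots*\mathcal{C}_{n_r},
\]
in the notation of the Example. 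Thus it only remains to show that each cycle length $n_k$ is even, so that the cyclic factors are indeed of the form $\mathcal{C}_{2q}$.

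The parity claim is the main obstacle, and I would establish it by translating a dual cycle back into $G$ and exploiting that $G$ is bipartite. Fix a cyclic component $R_1,R_2,\dots,R_m,R_1$ of $G_F$. Each $R_i\in\mathcal{R}_F$ is a quadrilateral whose two $M_F$-matched edges are an opposite pair; moreover consecutive regions in $G_F$ must share one of these \emph{matched} edges, since if $R_i$ and $R_{i+1}$ shared an unmatched edge $e$ of $R_i$, the endpoints of $e$ would already be matched by $M_F$ to vertices outside $e$, forcing both opposite pairs of $R_{i+1}$ to be unmatched and contradicting $R_{i+1}\in\mathcal{R}_F$ (this is exactly the observation underlying the degree bound). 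Hence the shared matched edges $f_1,\dots,f_m$ form the rungs of an annular band of quadrilaterals, and the remaining opposite pair of each $R_i$ consists of one unmatched edge on the inner boundary and one on the outer boundary of the band. Each region therefore contributes exactly one edge to the inner boundary, which is consequently a closed walk in $G$ of length $m$. Since $G$ is bipartite, every closed walk has even length, so $m=2q$ and the component is $\mathcal{C}_{2q}$. I expect the only delicate point to be confirming that the band is genuinely annular with a well-defined inner boundary, but the bipartite parity conclusion is robust even when this boundary walk is non-simple, so the argument goes through in all cases.
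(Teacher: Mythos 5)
Your proposal is correct, and its skeleton is the paper's: the identification $link_{\mathcal{C}}(F)\cong I(G_F)$, the degree-two observation giving a decomposition of $G_F$ into paths and cycles, and the (left implicit in the paper) fact that independence complexes convert disjoint unions into joins. The only real content is the evenness of the cycle components, and there your packaging differs from the paper's. The paper never argues about quadrilaterals specifically: it gets evenness from the theorem stated immediately after the corollary, namely that $G_F$ is bipartite for \emph{every} bipartite planar $G$ (arbitrary elementary regions), proved by contradiction --- an odd cycle $R_1,\ldots,R_{2m+1}$ in $G_F$ would force, for each $i$, an odd number of edges of $R_i$ strictly between $R_i\cap R_{i-1}$ and $R_i\cap R_{i+1}$, and the union of these arcs would be an odd closed walk in $G$. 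Your annular-band argument is precisely the quadrilateral specialization of that parity mechanism, run directly instead of contrapositively. The paper's version buys generality (it is what also feeds Corollary \ref{Cor:link} on suspensions); yours buys a concrete, self-contained picture sufficient for this corollary.

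The one place where you owe more is exactly the point you flag and then wave off. The genuine risk is \emph{not} that the inner boundary walk is non-simple --- it cannot be, since the rungs $f_1,\ldots,f_m$ are pairwise disjoint edges of the matching $M_F$, so every vertex of the union of unmatched edges has degree exactly two. The risk is that the band closes up \emph{one-sidedly}, like a M\"obius strip, in which case the $2m$ unmatched edges form a \emph{single} closed walk of length $2m$; bipartiteness of $G$ then says only that $2m$ is even, which is vacuous. Your robustness remark does not exclude this configuration. What excludes it is planarity: the dual cycle through $R_1,\ldots,R_m$ is a Jordan curve in the plane, and inside each quadrilateral $R_i$ the curve (entering across $f_{i-1}$, leaving across $f_i$) separates the two unmatched edges, so one of them lies inside the curve and one outside; hence the inside edges really do concatenate into a closed walk of length $m$. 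This is a fillable gap --- and the paper's own proof of its bipartiteness theorem leans on the same unstated planarity fact --- but as written your justification addresses the wrong failure mode.
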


\begin{thm} Let $G$ be a bipartite planar graph that has a
 perfect matching. For any face $F=(M_F,C_F)$ of
  $\mathcal{C}(G)$ the graph $G_F$ is
bipartite.
\end{thm}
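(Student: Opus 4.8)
The plan is to exhibit an explicit proper $2$-coloring of the vertex set $\mathcal{R}_F$ of $G_F$, which is exactly what bipartiteness demands. The color of a region $R\in\mathcal{R}_F$ will record the \emph{rotational sense} in which the matching $M_F$ sits on the boundary cycle of $R$.

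First I would use that $G$ is bipartite to fix a coloring $V=B\cup W$, and orient every edge of $G$ from its endpoint in $B$ to its endpoint in $W$. By definition of $\mathcal{R}_F$, every second edge of the boundary of $R$ lies in $M_F$; since bipartiteness forces that boundary to be an even cycle, the edges of $R$ in $M_F$ form a perfect matching of the cycle. Traversing $\partial R$ counterclockwise, consecutive boundary edges are alternately traversed along and against their $B\to W$ orientation, so the matching edges of $R$ (being every second edge) are \emph{all} traversed in one and the same direction. I would therefore set $\sigma(R)=+1$ if along the counterclockwise traversal the matching edges of $R$ agree with their $B\to W$ orientation, and $\sigma(R)=-1$ otherwise; the alternating structure is precisely what makes this single sign well defined.

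The heart of the argument is to show that $\sigma(R)\ne\sigma(R')$ whenever $R,R'\in\mathcal{R}_F$ share a boundary edge $e=\{b,w\}$, which is exactly the condition for $R,R'$ to be adjacent in $G^\circ$ and hence in $G_F$. Here I would invoke the standard fact that a common edge of two bounded faces is traversed in opposite directions by their two counterclockwise boundary walks. Splitting into the cases $e\in M_F$ and $e\notin M_F$, and tracking in each whether the matching edges of $R$ and of $R'$ incident to $e$ run with or against the fixed $b\to w$ orientation, one finds in both cases that the opposite traversal of $e$ forces $\sigma(R)$ and $\sigma(R')$ to receive opposite signs. Consequently $\sigma$ is a proper $2$-coloring of $G_F$, and $G_F$ is bipartite.

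I expect the only delicate point to be the bookkeeping in this case analysis: one must verify that ``every second edge in $M_F$'' identifies $e$ as a matching edge of $R$ exactly when $e\in M_F$ (and as a non-matching edge of $R$ exactly when $e\notin M_F$), and that the $B\to W$ convention is applied consistently to the two faces on either side of $e$. The well-definedness of $\sigma$ should be stated explicitly, since it is the fact that all matching edges of a single region share one orientation sense that makes the value $\sigma(R)$ meaningful in the first place.
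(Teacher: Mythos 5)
Your proof is correct, and it takes a genuinely different route from the paper's. The paper argues by contradiction: assuming $G_F$ contains an odd cycle $R_1,R_2,\ldots,R_{2m+1}$, it studies the edges shared by consecutive regions (each maximal run of common edges has odd length, with first and last edge in $M_F$) and glues arcs of the boundaries of the $R_i$ into an odd cycle of $G$ itself, contradicting bipartiteness of $G$. You instead exhibit an explicit proper $2$-coloring $\sigma$ of $\mathcal{R}_F$, recording whether the matching edges of a region run with or against the fixed $B\to W$ orientation along its counterclockwise boundary walk; this is well defined exactly as you say, because bipartiteness makes consecutive boundary edges alternate in orientation sense. Your key step is purely local: for a single shared edge $e$, the fact that the two counterclockwise boundary walks traverse $e$ in opposite directions forces $\sigma(R)\neq\sigma(R')$, and both of your cases do close (if $e\in M_F$, the edge $e$ itself witnesses the opposite signs; if $e\notin M_F$, the matching edges immediately following $e$ in the two walks do). What your approach buys: it avoids the global gluing of arcs and the parity bookkeeping about shared edges, which is the tersest part of the paper's argument (justifying that the first and last common edges of adjacent regions lie in $M_F$ requires the further observation that otherwise an end-vertex of the common run would be covered twice by $M_F$); moreover it is constructive, handing you the bipartition of $G_F$ rather than just its existence. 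What the paper's approach buys: it needs no orientation conventions and traces the obstruction directly to an odd cycle in $G$, making transparent why bipartiteness of $G$ is the operative hypothesis. Both arguments use planarity in an essential way, yours through the standard fact that the two faces bordering an edge traverse it in opposite senses in their counterclockwise boundary walks.
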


\begin{proof}
Assume that $G_F$ contains an odd cycle $R_1,R_2,\ldots,
R_{2m+1}$. Recall that $R_i$ is an elementary region of $G$ and
the that every second edge of $R_i$ is contained in $M_F$. Two
neighborly regions $R_i$ and $R_{i+1}$ have to share the odd
number of edges, the first and the last of their common edges
belong to $M_F$. Therefore, for each region $R_i$, there is an odd
number of common edges of $R_i$ and $R_{i-1}$ that belong to
$M_F$. Obviously, the same holds for $R_i$ and $R_{i+1}$.

So, we can conclude that there is an odd number of edges
 of $R_i$ that are between $R_i \cap R_{i-1}$ and $R_i \cap R_{i+1}$
 (the first and the
 last one of these edges are not in $M_F$). The union of all
of these edges (for all regions $R_i$) is an odd cycle in $G$,
which is a contradiction.

\end{proof}
Barmak proved in \cite{Barmak} (see also in \cite{Na-Re}) that the
independence complexes of bipartite graphs are suspensions, up to
homotopy. This implies the next result.

\begin{cor}\label{Cor:link}
All links in $\mathcal{C}(G)$ are homotopy equivalent to
suspensions. Therefore, the link of any face in $\mathcal{C}(G)$
has at most two connected components.
\end{cor}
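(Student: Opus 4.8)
The plan is to obtain this corollary as a direct composition of the two results established immediately above with Barmak's theorem on independence complexes of bipartite graphs. First I would recall that, by the Proposition identifying links with independence complexes, every link in $\mathcal{C}(G)$ has the form $link_\mathcal{C}(F)\cong I(G_F)$ for some face $F=(M_F,C_F)$. Since a vertex is a $0$-dimensional face, this already covers the vertex links as the special case $\dim F=0$ (where the face-link and vertex-link definitions agree), so it suffices to treat an arbitrary face $F$.

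Next I would invoke the preceding Theorem, which guarantees that $G_F$ is bipartite. Barmak's result in \cite{Barmak} then applies verbatim: the independence complex of any bipartite graph is homotopy equivalent to a suspension, so $I(G_F)\simeq\Sigma Y$ for some space $Y$. Chaining this with the isomorphism from the Proposition yields $link_\mathcal{C}(F)\simeq\Sigma Y$, which is the first assertion.

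For the statement about connected components, I would use the elementary dichotomy that a suspension has at most two of them. Indeed, if $Y\neq\emptyset$ then $\Sigma Y$ is connected, because both suspension points are joined to any chosen point of $Y$ through the corresponding meridian arc $\{y\}\times[0,1]$, and every other point lies on such an arc; if instead $Y=\emptyset$ then $\Sigma Y=S^0$, two points. Since homotopy equivalence preserves $\pi_0$, the link $link_\mathcal{C}(F)$ has the same number of connected components as $\Sigma Y$, hence at most two. (This bound is genuinely attained: for $G_F$ a single edge one gets $I(G_F)=S^0=\Sigma\emptyset$.)

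Because the substantive combinatorial work has already been carried out in the Theorem, namely the bipartiteness of $G_F$, and the topological input is supplied entirely by Barmak's theorem, I would expect no genuine obstacle here; the corollary is essentially a formal consequence of the cited facts. The only point warranting care is the count of connected components of a suspension, and that is settled by the $\emptyset$-versus-nonempty dichotomy above.
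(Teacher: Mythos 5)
Your proposal is correct and follows exactly the paper's own route: the Proposition identifying $link_\mathcal{C}(F)\cong I(G_F)$, the Theorem establishing that $G_F$ is bipartite, and Barmak's theorem on independence complexes of bipartite graphs, with the elementary count of connected components of a suspension (the $\emptyset$ versus nonempty dichotomy) as the final step. The paper leaves all of this implicit in the single phrase ``This implies the next result,'' so your write-up is simply a more explicit rendering of the same argument.
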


For any simplicial complex $K$ there exists a bipartite graph $G$
such that the independence complex of $G$ is homotopy equivalent
to the suspension over $K$, see \cite{Barmak}. Skwarski proved in
\cite{Skv} (see also \cite{Barmak}) that there exists a planar
graph $G$ whose independence complex is homotopy equivalent to an
iterated suspension of $K$.

We prove that the links of faces in cubical matching complexes are
independence complexes of bipartite planar graphs. What can be
said about homotopy types of these complexes?

\begin{rem}\label{R:OQ}
There is a natural question, posed by Ehrenborg in \cite{EhrCUB}:
\textit{For what graphs $G$ would the cubical matching complex
$\mathcal{C}(G)$ be pure, shellable, non-pure shellable?}

The complexes $\mathcal{L}_n$ are non-pure for $n>4$, and the
complexes $\mathcal{C}_n$ are non-shellable for $n>5$. Therefore,
these complexes can be used to show that the cubical matching
complex of a concrete graph is non-pure or non-shellable.

\end{rem}
\section{Collapsibility and contractibility of cubical
matching complexes}

 The next theorem is the main result in
\cite{EhrCUB}.

\begin{thm}[Theorem 1.2 in \cite{EhrCUB}]\label{T:EHr}
For a planar bipartite graph $G$ that has a perfect matching, the
cubical matching complex $\mathcal{C}(G)$ is collapsible.
\end{thm}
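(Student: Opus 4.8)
The plan is to prove collapsibility by building an acyclic discrete Morse matching on the face poset of $\mathcal{C}(G)$ whose only critical cell is a single vertex; by Forman's theorem such a complex collapses onto that cell. The coordinates of Proposition~\ref{P:cord} give exactly the right bookkeeping: fix the reference matching $M_0$ and the order $R_1,\dots,R_d$, so that each face $F=(M_F,C_F)$ is the coordinate subcube of $C^d$ recorded by a vector $\tau(F)\in\{0,1,*\}^d$ with $\tau_i=*$ precisely when $R_i\in C_F$ (the free directions) and $\tau_i\in\{0,1\}$ otherwise.

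First I would import the standard matching that collapses the full cube to its origin $M_0$. For a face $\tau\neq\mathbf{0}$, let $i$ be the least index with $\tau_i\neq 0$, and match $\tau$ with the face obtained by changing $\tau_i$ from $1$ to $*$ when $\tau_i=1$, and from $*$ to $1$ when $\tau_i=*$. On the full cube this is an acyclic matching with the unique critical cell $\mathbf{0}$. Since $\mathcal{C}(G)$ is a subcomplex, hence downward closed, I would simply restrict it: a matched pair survives exactly when its larger face still belongs to $\mathcal{C}(G)$, and any alternating cycle lying inside $\mathcal{C}(G)$ would be one inside $C^d$ as well, so acyclicity is inherited with no extra work.

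Everything then reduces to counting the critical cells of the restricted matching. A face whose first nonzero coordinate is a $*$ is always matched downward (its partner is one of its own faces, so it lies in the complex) and is never critical; a face with first nonzero coordinate equal to $1$ survives as critical precisely when freeing the corresponding region $R_i$ leaves the complex, i.e.\ when $R_i$ is not flippable in the tiling encoded by $\tau$. Through the identification $link_{\mathcal{C}}(F)\cong I(G_F)$, these exceptional cells record the local failure of the lowest-indexed regions to be flippable.

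This last step is where I expect the real difficulty to sit, and it is a genuine one. To make $M_0$ the unique critical cell I would need both that no exceptional critical cell of positive dimension survives and, more fundamentally, that the flip graph of $\mathcal{C}(G)$ — perfect matchings joined by single-region flips — is connected; otherwise each connected component of $\mathcal{C}(G)$ must carry at least one critical vertex, and no acyclic matching can descend to a single point. The connectivity is the step most likely to break: for simply connected domino or lozenge regions it follows from Thurston-style height functions, and I would attempt to transport such a monotone potential to a general bipartite planar $G$ by orienting edges from the bipartition and tracking it along flips. Establishing this connectivity for arbitrary bipartite planar $G$ is the crux on which the entire collapsibility claim rests, and I would scrutinize it carefully before trusting the conclusion.
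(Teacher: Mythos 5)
Your instinct at the final step was exactly right, and it is fatal: the flip-graph connectivity on which your whole plan rests is not merely hard to establish for general bipartite planar $G$ --- it is false, and with it the theorem itself. This paper does not prove Theorem~\ref{T:EHr}; it refutes it. Example~\ref{E:counter} (Figure~\ref{F:nijekol}) exhibits a counterexample: an outer $4$-cycle with an inner $4$-cycle nested inside it, joined by two edges at opposite corners. Those two joining edges lie in no perfect matching, so every perfect matching splits as a matching of the outer cycle together with a matching of the inner cycle (four matchings in all); the two hexagonal regions between the squares can never appear in a tiling, and the only flippable elementary region is the inner square. Hence $\mathcal{C}(G)$ is two disjoint segments --- disconnected, so not collapsible. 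The Jockusch example cited in the paper gives four disjoint segments. In your own Morse-theoretic framework this is already decisive: every connected component of $\mathcal{C}(G)$ must carry at least one critical vertex, so no acyclic matching can have a single critical cell. Your fallback of transporting a Thurston-style height function also fails for a structural reason: those potentials require simple connectivity, and the counterexamples are precisely of annular type --- after deleting forced and forbidden edges, a component of the reduced graph $G'$ sits inside an elementary region of another component, in which case $\mathcal{C}(G')=\mathcal{C}(G_1)\times\left(\mathcal{C}(G_2)\setminus\{R\}\right)$, which is in general only a union of collapsible complexes.

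For comparison, the oversight in Ehrenborg's original argument is of the same nature as the one you flagged: his proof invoked Propp's theorem that the perfect matchings of a bipartite planar graph form a distributive lattice, together with Kalai's result that the cubical complex of a meet-distributive lattice is collapsible, but Propp's theorem assumes the graph is connected and that every edge lies in some but not all perfect matchings --- hypotheses destroyed by forced and forbidden edges. What the paper salvages is weaker: collapsibility holds when the reduced graph $G'$ is connected or all its components are separated, and in general Theorem~\ref{T:generalcase} shows (by induction on edges via the decomposition~(\ref{E:RRforC}) and the Gluing Lemma) that $\mathcal{C}(G)$ is contractible or a disjoint union of contractible complexes. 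One further caution about your method even in the salvageable cases: restricting the standard cube matching of Proposition~\ref{P:cord} typically leaves many positive-dimensional critical cells (every face whose first nonzero coordinate is a $1$ that cannot be freed survives), and since contractibility does not imply collapsibility, establishing a single critical cell would require genuinely new cancellation arguments beyond the restriction you describe.
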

The proof of the above statement is based on the next two results:
\begin{itemize}
    \item[$(i)$](Propp, Theorem 2
in \cite{Propp}) \textit{The set of all perfect matchings of a
bipartite planar graph is a distributive lattice.}
    \item[$(ii)$](Kalai, see in
\cite{EC1}, Solution to Exercise 3.47
    c) \textit{The cubical
complex of a meet-distributive lattice is collapsible.}
\end{itemize}
\noindent Note however that Propp in his proof of $(i)$ assumed
the following two additional conditions for bipartite planar graph
$G$:
\begin{itemize}
    \item[$(*)$]  Graph $G$ is connected, and
    \item[$(**)$] Any edge of $G$ is contained in some matching of $G$
but not in others.
\end{itemize}

\begin{exm}\label{E:counter}The next figure shows a bipartite
planar graph whose cubical matching complex is not collapsible.

\begin{figure}[htbp]
\begin{center}
\begin{tikzpicture}[scale=.898]{center}
\draw [thick] (0,0) rectangle (4,4); \draw [thick] (1,1) rectangle
(3,3); \draw [fill] (0,0) circle [radius=0.111];\draw [fill] (0,4)
circle [radius=0.111];\draw [fill] (4,0) circle
[radius=0.111];\draw [fill] (4,4) circle [radius=0.111];

\draw [fill] (1,1) circle [radius=0.111];\draw [fill] (1,3) circle
[radius=0.111];\draw [fill] (3,1) circle [radius=0.111];\draw
[fill] (3,3) circle [radius=0.111]; \draw[thick] (0,0)--(1,1);
\draw[thick] (3,3)--(4,4);

\node at (2,-.5) {$G$};

\node at (9.7,-0.2) {$\mathcal{C}(G)$};

\draw [ultra thin] (6.6,3.5) rectangle (7.6,4.5); \draw [ultra
thin] (6.9,3.8) rectangle (7.3,4.2); \draw [ultra thin]
(6.6,3.5)--(6.9,3.8);\draw [ultra thin] (7.6,4.5)--(7.3,4.2);

\draw [ultra thin] (11.6,3.5) rectangle (12.6,4.5); \draw [ultra
thin] (11.9,3.8) rectangle (12.3,4.2);

\draw [ultra thin] (11.6,3.5)--(11.9,3.8);\draw [ultra thin]
(12.6,4.5)--(12.3,4.2);

\draw [ultra thin] (6.6,-.5) rectangle (7.6,0.5); \draw [ultra
thin] (6.9,-.2) rectangle (7.3,.2); \draw [ultra thin]
(6.6,-.5)--(6.9,-0.2);\draw [ultra thin] (7.6,0.5)--(7.3,0.2);

\draw [ultra thin] (11.6,-.5) rectangle (12.6,0.5); \draw [ultra
thin] (11.9,-.2) rectangle (12.3,.2); \draw [ultra thin]
(11.6,-.5)--(11.9,-0.2);\draw [ultra thin] (12.6,0.5)--(12.3,0.2);

\draw [ultra thick](6.6,3.5)--(6.6,4.5);\draw [ultra
thick](7.6,3.5)--(7.6,4.5); \draw [ultra
thick](7.3,3.8)--(7.3,4.2);\draw [ultra
thick](6.9,3.8)--(6.9,4.2);

\draw [ultra thick](11.6,3.5)--(11.6,4.5);\draw [ultra
thick](12.6,3.5)--(12.6,4.5); \draw [ultra
thick](11.9,-.2)--(12.3,-.2);\draw [ultra
thick](11.9,.2)--(12.3,.2);

\draw [ultra thick](6.6,-.5)--(7.6,-.5);\draw [ultra
thick](6.6,0.5)--(7.6,0.5); \draw [ultra
thick](12.3,3.8)--(11.9,3.8);\draw [ultra
thick](11.9,4.2)--(12.3,4.2);

\draw [ultra thick](11.6,-.5)--(12.6,-.5);\draw [ultra
thick](11.6,0.5)--(12.6,0.5);\draw [ultra
thick](7.3,.2)--(7.3,-.2);\draw [ultra thick](6.9,.2)--(6.9,-.2);

\draw [ultra thick] (8,0.5)--(11,0.5); \draw [ultra thick] (8,3.5)
--(11,3.5);
\end{tikzpicture}
\caption{A bipartite planar graph $G$ for which $\mathcal{C}(G)$
is not collapsible.} \label{F:nijekol}
\end{center}
\end{figure}
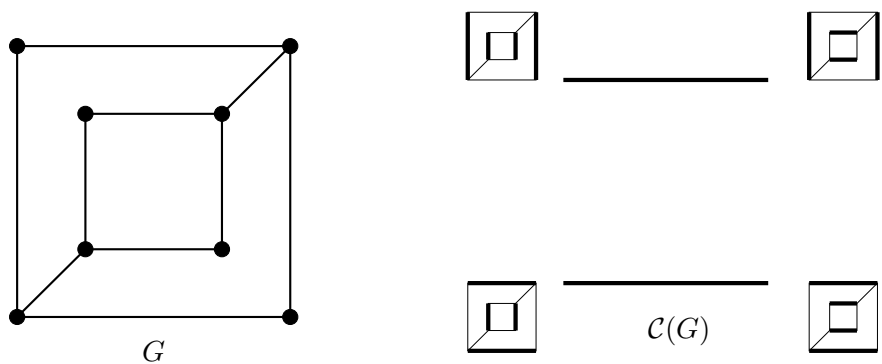

\end{exm}

 Also, the Jockusch
example (page 41 in \cite{Propp}, a bipartite planar graph with
$20$ edges, but just $12$ of them can be used in a perfect
matching), describe a graph $G$ whose cubical matching complex is
a disjoint union of four segments.

The edges that do not appear in any perfect matching of a graph
$G$ (the forbidden edges) can be deleted. Also, if the edge $xy$
is a forced edge ($xy$ appears in all perfect matching of $G$),
then we may consider the graph $G-\{x,y\}$.

\begin{figure}[htbp]
\begin{center}
\begin{tikzpicture}[scale=.7]{center}
\draw [fill=black] (0,.5) circle [radius=0.039]; \draw
[fill=black] (-0.5,4) circle [radius=0.039]; \draw [fill=black]
(1,4.8) circle [radius=0.039];\draw [fill=black] (2,0) circle
[radius=0.039];\draw [fill=black] (2.6,5) circle [radius=0.039];
\draw [fill=black] (4,.5) circle [radius=0.039]; \draw
[fill=black] (4,4.7) circle [radius=0.039]; \draw [fill=black]
(5,3) circle [radius=0.039];

\draw[thick]
(0,.5)--(-.5,4)--(1,4.8)--(2.6,5)--(4,4.7)--(5,3)--(4,.5)--(2,0)--(0,.5);

\draw[dashed](2,0)--(1,4.8);

\node at (1.8,2.8) {$e$}; \node at (7.8,2.8) {$e$};\node at
(13.8,2.8) {$e$};

\draw [fill=black] (6,.5) circle [radius=0.039]; \draw
[fill=black] (5.5,4) circle [radius=0.039]; \draw [fill=black]
(7,4.8) circle [radius=0.039];\draw [fill=black] (8,0) circle
[radius=0.039];\draw [fill=black] (8.6,5) circle [radius=0.039];
\draw [fill=black] (10,.5) circle [radius=0.039]; \draw
[fill=black] (10,4.7) circle [radius=0.039]; \draw [fill=black]
(11,3) circle [radius=0.039];

\draw[thick] (6,.5)--(5.5,4); \draw[dashed](5.5,4)--(7,4.8);
\draw[thick] (7,4.8)--(8.6,5); \draw[dashed](8.6,5)--(10,4.7);
\draw[thick](10,4.7)--(11,3);\draw[dashed] (11,3)--(10,.5);
\draw[thick](10,.5)--(8,0);\draw[dashed] (8,0)--(6,.5);

\draw[dashed](8,0)--(7,4.8);

\draw [fill=black] (12,.5) circle [radius=0.039]; \draw
[fill=black] (11.5,4) circle [radius=0.039]; \draw [fill=black]
(13,4.8) circle [radius=0.039];\draw [fill=black] (14,0) circle
[radius=0.039];\draw [fill=black] (14.6,5) circle [radius=0.039];
\draw [fill=black] (16,.5) circle [radius=0.039]; \draw
[fill=black] (16,4.7) circle [radius=0.039]; \draw [fill=black]
(17,3) circle [radius=0.039];

\draw[thick] (12,.5)--(11.5,4); \draw[dashed](11.5,4)--(13,4.8);
\draw[dashed] (13,4.8)--(14.6,5); \draw[thick](14.6,5)--(16,4.7);
\draw[dashed](16,4.7)--(17,3);\draw[thick] (17,3)--(16,.5);
\draw[dashed](16,.5)--(14,0);\draw[dashed] (14,0)--(12,.5);

\draw[thick](14,0)--(13,4.8);

\end{tikzpicture}
\caption{If a new region can be included in a tiling of $G-e$,
then $e$ is not forbidden.} \label{F:GG'}
\end{center}
\end{figure}
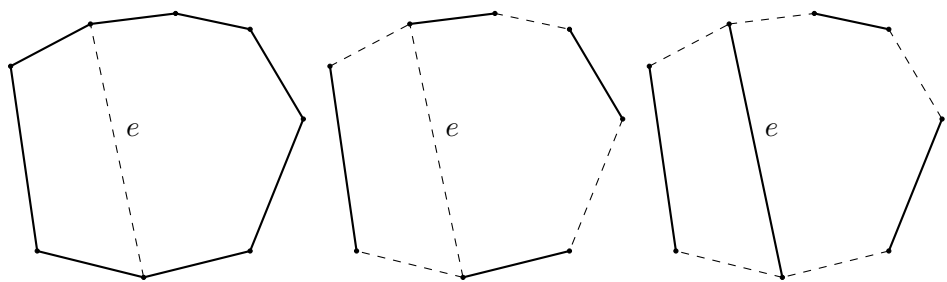

\begin{rem}\label{R:GsameG'}
Let $e$ denote a forbidden edge in $G$ and let $G'=G-e$. The
possible new elementary region of $G'$, that appears after we
delete $e$, can not be included in a tiling of $G'$. Otherwise, we
can find a perfect matching of $G$ that contains $e$, see Figure
\ref{F:GG'}. In a similar way we conclude that the new regions
that appear after deleting a forced edge can not be included in a
tiling of $G'$.
\end{rem}

Let $G'$ denote the graph obtained from $G$ after all deletions.
Unfortunately, this new graph (after deleting all forced and
forbidden edges) may be non-connected.

If $G'$ is connected, then the collapsibility of $\mathcal{C}(G')$
follows from Ehrenborg's proof. Also, if $G'$ is non-connected,
and all of its connected components are separated (there is no
component of $G'$ that is contained in an elementary region of
another component), then $\mathcal{C}(G')$ is collapsible as a
product of collapsible complexes.

By using Remark \ref{R:GsameG'}, we can establish an obvious
bijection between tilings of $G'$ and tilings of $G$ (we just add
all forced edges). Therefore, Theorem \ref{T:EHr} holds if $G'$ is
connected or if all of its connected components are separated.

However, Theorem \ref{T:EHr} fails if $G'$ has two different
connected components $G_1$ and $G_2$ such that $G_1$ is contained
in an elementary region $R$ of $G_2$, see Example \ref{E:counter}.
In that case we have that
$$\mathcal{C}(G')=\mathcal{C}(G_1)\times\left(\mathcal{C}(G_2)
\setminus\{R\}\right),$$and $\mathcal{C}(G')$ is a union of
collapsible complexes. Here $\mathcal{C}(G_2) \setminus\{R\}$
denote the cubical complex obtained from $\mathcal{C}(G_2)$ by
deleting all tilings (faces) that contain $R$ as an elementary
region.

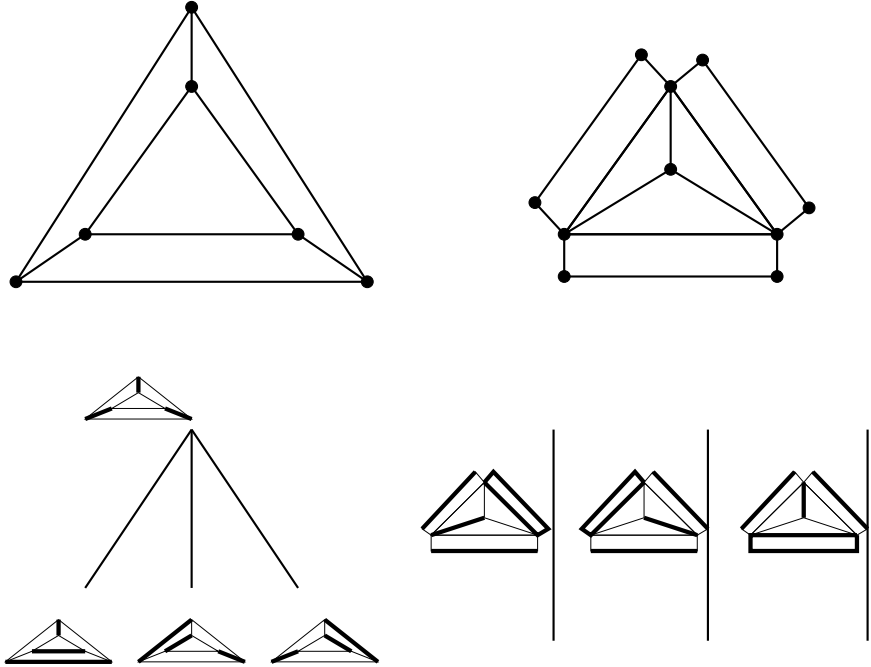
\begin{figure}[htbp]
\begin{center}
\begin{tikzpicture}[scale=.7]{center}
\draw [thick] (0,4.5)--(2,1.7)--(-2,1.7)--(0,4.5); \draw [ thick]
(0,6)--(3.3,.8)--(-3.3,.8)--(0,6);  \draw [ thick] (0,6)--(0,4.5);

\draw [ thick] (3.3,.8)--(2,1.7); \draw [ thick]
(-3.3,.8)--(-2,1.7);

\draw [fill] (0,4.5) circle [radius=0.111]; \draw [fill] (2,1.7)
circle [radius=0.111]; \draw [fill] (-2,1.7) circle
[radius=0.111]; \draw [fill] (0,6) circle [radius=0.111]; \draw
[fill] (3.3,.8) circle [radius=0.111]; \draw [fill] (-3.3,.8)
circle [radius=0.111];

%DESNA STRANA

\draw [fill] (9,4.5) circle [radius=0.111]; \draw [fill] (11,1.7)
circle [radius=0.111];

\draw [fill] (7,1.7) circle [radius=0.111]; \draw [fill] (9,2.93)
circle [radius=0.111];

\draw [ thick] (9,4.5)--(11,1.7)--(7,1.7)--(9,4.5);

\draw [thick] (9,2.93)--(9,4.5);\draw [ thick]
(9,2.93)--(11,1.7);\draw [thick] (9,2.93)--(7,1.7);

\draw [ thick] (9,4.5)--(11,1.7)--(11.6,2.2)--(9.6,5)--(9,4.5);

\draw [fill] (11.6,2.2) circle [radius=0.111]; \draw [fill]
(9.6,5) circle [radius=0.111];

 \draw [ thick]
(11,1.7)--(7,1.7)--(7,.9)--(11,.9)--(11,1.7);

\draw [fill] (7,.9) circle [radius=0.111]; \draw [fill] (11,.9)
circle [radius=0.111];

 \draw [
thick] (7,1.7)--(9,4.5)--(8.45,5.1)--(6.45,2.3)--(7,1.7); \draw
[fill] (8.45,5.1) circle [radius=0.111]; \draw [fill] (6.45,2.3)
circle [radius=0.111];
%KOMPLEKSI

\draw [thick] (0,-2)--(-2,-5);\draw [thick] (0,-2)--(0,-5); \draw
[ thick] (0,-2)--(2,-5);

\draw [ultra thin] (-1,-1)--(0,-1.8)--(-2,-1.8)--(-1,-1);

\draw [ultra thin] (-1,-1.3)--(-0.5,-1.6)--(-1.5,-1.6)--(-1,-1.3);
\draw [ultra thick](-1,-1)--(-1,-1.3);\draw [ultra
thick](0,-1.8)--(-0.5,-1.6);\draw [ultra
thick](-2,-1.8)--(-1.5,-1.6);

\draw [ultra thin]
(-2.5,-5.6)--(-1.5,-6.4)--(-3.5,-6.4)--(-2.5,-5.6);

\draw [ultra thin] (-2.5,-5.9)--(-2,-6.2)--(-3,-6.2)--(-2.5,-5.9);
\draw [ultra thick](-2.5,-5.6)--(-2.5,-5.9);\draw [ultra
thick](-1.5,-6.4)--(-3.5,-6.4);\draw [ultra
thick](-2,-6.2)--(-3,-6.2); \draw [ultra
thin](-1.5,-6.4)--(-2,-6.2); \draw [ultra
thin](-3.5,-6.4)--(-3,-6.2);

\draw [ultra thin] (0,-5.6)--(1,-6.4)--(-1,-6.4)--(0,-5.6); \draw
[ultra thin] (0,-5.9)--(.5,-6.2)--(-.5,-6.2)--(0,-5.9); \draw
[ultra thick](1,-6.4)--(.5,-6.2); \draw [ultra
thick](0,-5.6)--(-1,-6.4); \draw [ultra
thick](-.5,-6.2)--(0,-5.9);

\draw [ultra thin] (0,-5.6)--(0,-5.9);\draw [ultra thin]
(-1,-6.4)--(-.5,-6.2);

\draw [ultra thin] (2.5,-5.6)--(3.5,-6.4)--(1.5,-6.4)--(2.5,-5.6);
\draw [ultra thin] (2.5,-5.9)--(3,-6.2)--(2,-6.2)--(2.5,-5.9);
\draw [ultra thick](1.5,-6.4)--(2,-6.2); \draw [ultra
thick](3.5,-6.4)--(2.5,-5.6); \draw [ultra
thick](2.5,-5.9)--(3,-6.2); \draw [ultra thin]
(2.5,-5.6)--(2.5,-5.9); \draw [ultra thin] (3.5,-6.4)--(3,-6.2);

\draw [thick] (9.7,-2)--(9.7,-6); \draw [thick]
(12.7,-2)--(12.7,-6); \draw [thick] (6.8,-2)--(6.8,-6);

\draw [ultra thin] (5.5,-3)--(6.5,-4)--(4.5,-4)--(5.5,-3);

\draw [ultra thin] (5.5,-3.67)--(5.5,-3);\draw [ultra thin]
(5.5,-3.67)--(6.5,-4);\draw [ultra thick] (5.5,-3.67)--(4.5,-4);

\draw [ultra thick]
(5.5,-3)--(6.5,-4)--(6.7,-3.88)--(5.67,-2.8)--(5.5,-3);

\draw [ultra thin]
(5.5,-3)--(4.5,-4)--(4.33,-3.88)--(5.33,-2.8)--(5.5,-3); \draw
[ultra thin] (6.5,-4)--(4.5,-4)--(4.5,-4.3)--(6.5,-4.3)--(6.5,-4);
\draw [ultra thick] (4.33,-3.88)--(5.33,-2.8); \draw [ultra thick]
(4.5,-4.3)--(6.5,-4.3);

\draw [ultra thin] (8.5,-3)--(9.5,-4)--(7.5,-4)--(8.5,-3);

\draw [ultra thin] (8.5,-3.67)--(8.5,-3);\draw [ultra thick]
(8.5,-3.67)--(9.5,-4);\draw [ultra thin] (8.5,-3.67)--(7.5,-4);

\draw [ultra thin]
(8.5,-3)--(9.5,-4)--(9.7,-3.88)--(8.67,-2.8)--(8.5,-3);

\draw [ultra thick]
(8.5,-3)--(7.5,-4)--(7.33,-3.88)--(8.33,-2.8)--(8.5,-3); \draw
[ultra thin] (9.5,-4)--(7.5,-4)--(7.5,-4.3)--(9.5,-4.3)--(9.5,-4);

 \draw [ultra thick]
(9.7,-3.88)--(8.67,-2.8); \draw [ultra thick]
(7.5,-4.3)--(9.5,-4.3);

\draw [ultra thin] (11.5,-3)--(12.5,-4)--(10.5,-4)--(11.5,-3);
\draw [ultra thick] (11.5,-3)--(11.5,-3.67); \draw [ultra thin]
(11.5,-3.67)--(11.5,-3);\draw [ultra thin]
(11.5,-3.67)--(12.5,-4);\draw [ultra thin]
(11.5,-3.67)--(10.5,-4);

\draw [ultra thin]
(11.5,-3)--(12.5,-4)--(12.7,-3.88)--(11.67,-2.8)--(11.5,-3); \draw
[ultra thick](12.7,-3.88)--(11.67,-2.8); \draw [ultra thick]
(10.33,-3.88)--(11.33,-2.8);
 \draw [ultra thin]
(11.5,-3)--(10.5,-4)--(10.33,-3.88)--(11.33,-2.8)--(11.5,-3);
\draw [ultra thick]
(12.5,-4)--(10.5,-4)--(10.5,-4.3)--(12.5,-4.3)--(12.5,-4);

\end{tikzpicture}
\caption{Non-bipartite graphs and their cubical matching
complexes.} \label{F:primjerigener}
\end{center}
\end{figure}

Now, we consider the cubical matching complex for all planar
graphs that have a perfect matching (not necessarily bipartite).

\begin{defn}Let $G$ be a planar graph that allows a perfect
matching. A tiling of $G$ is a partition of the vertex set $V$
into disjoint blocks of the following two types:
\begin{itemize}
\item  an edge $\{x,y\}$ of $G$; or
 \item the set of vertices $\{v_1,v_2,\ldots, v_{2m}\}$
 of an even elementary cycle $R$.
 \end{itemize} Let $\mathcal{C}(G)$ denote the set of
 all tilings of
$G$. Note that $\mathcal{C}(G)$ is also a cubical complex.
\end{defn}

\begin{exm}
If $G$ is a graph of a triangular prism (embedded in the plane so
that the outer region is a triangle), then $\mathcal{C}(G)$ is a
union of three $1$-dimensional segments that share the same
vertex, see the left side of Figure \ref{F:primjerigener}. Each of
segments of $\mathcal{C}(G)$ corresponds to a rectangle of prism.
The link of the common vertex of these segments is a
$0$-dimensional complex with three points. Such situation is no
possible for bipartite planar graphs, see Corollary
\ref{Cor:link}.
\end{exm}

The next theorem describe the homotopy type of the cubical
matching complex associated to a planar graph that allows a
perfect matching.
\begin{thm}\label{T:generalcase}Let $G$ be a planar graph that has
a perfect matching. The cubical complex $\mathcal{C}(G)$ is
contractible or a
 disjoint union of
contractible complexes.
\end{thm}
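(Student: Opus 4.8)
The statement asserts precisely that every connected component of $\mathcal{C}(G)$ is contractible, so the plan is to fix one connected component $\mathcal{K}$ and show $\mathcal{K}$ is contractible; the dichotomy in the conclusion then records whether $\mathcal{C}(G)$ is connected or not. As in the discussion of forced and forbidden edges above (Remark \ref{R:GsameG'}), I would first delete the forbidden edges and strip the forced ones, which changes neither the set of tilings nor the complex, so I may assume $G$ is connected and that each remaining edge lies in some but not all matchings; this is only a convenience.

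First I would extend the coordinate construction of Proposition \ref{P:cord} to the non-bipartite setting. Fix a base matching $M_0\in\mathcal{K}$, set $M_0\mapsto\mathbf{0}$, and let $d$ be the number of even elementary regions; to a matching $M$ assign $x(M)\in\{0,1\}^d$ recording in coordinate $i$ the parity of the number of cycles of $M\triangle M_0$ enclosing $R_i$. The argument of Proposition \ref{P:cord} never used bipartiteness: $M\triangle M_0$ is always a disjoint union of cycles, and flipping a single region $R_i$ toggles exactly the $i$-th coordinate. Since $G$ is planar and connected, the boundaries of the bounded faces form a basis of the cycle space over $\mathbb{F}_2$, so $x(M)\oplus x(M')$ is the set of faces enclosed by the cycle $M\triangle M'$; hence $x$ is injective and realizes $\mathcal{K}$ as a cubical subcomplex of $C^d$ in which the face $(M_F,C_F)$ occupies the subcube whose free directions are exactly the regions of $C_F$.

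Next I would observe that $\mathcal{C}(G)$ is nonpositively curved. The proposition identifying links, $link_{\mathcal{C}}(F)\cong I(G_F)$, is deduced purely from the face relation and goes through verbatim once ``elementary cycle'' is read as ``even elementary cycle''; thus every vertex link of $\mathcal{C}(G)$ is an independence complex. Independence complexes are flag, since a set of vertices is independent if and only if its vertices are pairwise independent, so by Gromov's link condition the cube complex $\mathcal{C}(G)$ is locally CAT(0). Consequently each component $\mathcal{K}$, being a connected, locally CAT(0) cube complex, will be contractible the moment it is shown to be simply connected: a connected, simply connected, locally CAT(0) cube complex is globally CAT(0) and hence contractible.

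It remains to prove that $\mathcal{K}$ is simply connected, and this is the step I expect to be the crux. The clean way to get it is to show that the image of $\mathcal{K}$ in $C^d$ is convex, i.e.\ closed under the coordinatewise median, so that $\mathcal{K}$ is itself a CAT(0) cube complex. Locally this is harmless: two regions simultaneously flippable at a matching must be vertex-disjoint (two regions sharing a vertex cannot alternate at once, and flipping one destroys the alternation of the other), so the corresponding square is genuinely present and no empty square can occur around a vertex. The hard part will be the \emph{global} step, namely filling an arbitrary closed flip-sequence by such commuting squares. This is exactly where the non-bipartite case diverges from the bipartite one: the perfect matchings no longer form a distributive lattice (already in the prism the component is a three-pronged claw, not a lattice), and the obstruction is the monodromy of $x$ around odd cycles. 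The argument must use the innermost-region structure of the cycles of $M\triangle M_0$ in the plane to show that this monodromy never closes up a nontrivial loop within a single component, equivalently that $\mathcal{K}$ is median-closed. Once this convexity, and hence simple connectivity, is established, the Cartan–Hadamard argument above yields contractibility of $\mathcal{K}$ and the theorem follows.
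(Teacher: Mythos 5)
Your reduction is valid as far as it goes, and it is genuinely different in spirit from the paper's argument: the links in $\mathcal{C}(G)$ are independence complexes, hence flag, so Gromov's link condition makes each component a locally CAT(0) cube complex, and by Cartan--Hadamard a \emph{simply connected} component would be CAT(0), hence contractible. But the proposal stops exactly where the theorem's difficulty begins. Simple connectivity of a component (equivalently, your median-closedness of its image in $C^d$) is never proved; you write that the argument ``must use the innermost-region structure \dots to show that this monodromy never closes up a nontrivial loop,'' which is a plan, not an argument. Nothing in the proposal rules out an unfillable closed flip sequence inside one component, and all of the combinatorial content of the theorem is concentrated in precisely that step. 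For comparison, the paper proceeds by induction on the number of edges via the decomposition (\ref{E:RRforC}), $\mathcal{C}(G)=\mathcal{C}(G\setminus\{x,y\})\cup \mathcal{C}(G\setminus e)\cup Prism(\mathcal{C}(G\setminus R))$, and isolates the entire global difficulty in one lemma: each connected component of $\mathcal{C}(G\setminus\{x,y\})$ contains at most one component of $\mathcal{C}_{x,y}$. That lemma is proved by a shortest-path exchange argument on flip sequences, and it is exactly the kind of control over loops of flips that your approach would also need; without an analogue of it, the CAT(0) framework only repackages the problem, it does not solve it. (The Gluing Lemma then finishes the paper's induction; in your setting the analogous finish would be Cartan--Hadamard.)

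Two secondary points. First, your injectivity claim for the coordinate map is not correct as stated: in a non-bipartite graph the face-boundary basis of the cycle space includes the odd regions, which your coordinates do not record, so two matchings can differ by an even cycle enclosing only odd faces and still receive the same coordinates (this already happens for two triangles glued along an edge). The statement can be repaired \emph{within} a fixed component, since there $M\triangle M'$ is an $\mathbb{F}_2$-sum of even elementary region boundaries and is determined by its even-face coefficients, but that repair itself presupposes the two matchings are joined by flips, so it must be phrased per component. Second, your parenthetical claim that two regions sharing a vertex cannot both alternate with respect to the same matching is false (two adjacent squares of a ladder both alternate with respect to the all-vertical matching); what is true, and what flagness of the link actually rests on, is that two alternating regions sharing a vertex must share a matched edge, hence cannot be flipped simultaneously, while vertex-disjoint alternating regions always span a face of $\mathcal{C}(G)$.
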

This is a weaker version (we prove contractibility instead
collapsibility) of corrected Theorem \ref{T:EHr}, with a different
proof.

\begin{proof}
We use the induction on the number of edges of $G$. Let $e=xy$
denote an edge that belongs to the outer region $R^*$. Let $R\neq
R^*$ denote the elementary region that contains $e$. If $R$ is an
odd region, then all tilings of $G$ can be divided into two
disjoint classes:
\begin{itemize}
    \item[(a)] The
tilings of $G$ that do not use $e$. These tilings are just the
tilings of $G\setminus e$.
    \item[(b)]  The
tilings of $G$ that contain $e$ as an edge in a partial matching
correspond to the tilings of $G\setminus\{x,y\}$.
\end{itemize}
In that case we obtain that
$\mathcal{C}(G)=\mathcal{C}(G\setminus\{x,y\})\sqcup
 \mathcal{C}(G\setminus e)$
is a disjoint union of contractible complexes by inductive
 assumption.
\\

\noindent If $R$ is an even elementary region, then some tilings
of $G$ may to contain $R$. Note that these tilings are not
considered in (a) and (b). To describe the corresponding faces of
$\mathcal{C}(G)$, we consider $G\setminus R$, the graph obtained
from $G$ by deleting all vertices from $R$.

 Let $\mathcal{C}_e$ denote the subcomplex of $
 \mathcal{C}(G\setminus e)$ formed by all tilings that contain every
 second edge of $R$ (but do not contain $e$, obviously). Further,
 let $\mathcal{C}_{x,y}$ denote the subcomplex of
$\mathcal{C}(G\setminus\{x,y\})$, defined by tilings that contain
every second edge of $R$ (these tilings have to contain $e$). Note
that the both of complexes $\mathcal{C}_e$ and $\mathcal{C}_{x,y}$
are isomorphic to
 $\mathcal{C}(G\setminus R)$. In that case we obtain
\begin{equation}\label{E:RRforC}
\mathcal{C}(G)=\mathcal{C}(G\setminus\{x,y\})\cup
 \mathcal{C}(G\setminus e)\cup
 Prism(\mathcal{C}(G\setminus R)).
\end{equation}
Further, we have that $$\mathcal{C}(G\setminus e)
 \cap Prism(\mathcal{C}(G\setminus R))=\mathcal{C}_e
\textrm{ and } \mathcal{C}(G\setminus \{x,y\})
 \cap Prism(\mathcal{C}(G\setminus R))=\mathcal{C}_{x,y}.$$

The complexes on the right-hand side of (\ref{E:RRforC}) are
disjoint unions of contractible complexes by the inductive
hypothesis. Assume that
$$ \mathcal{C}(G\setminus\{x,y\})=A_1 \sqcup A_2\sqcup\cdots
\sqcup A_s \textrm{ and }\mathcal{C}_{x,y}=B_1\sqcup B_2 \sqcup
\cdots \sqcup B_t,$$ where $A_i$ and $B_j$ denote the contractible
components of corresponding complexes. Obviously, each complex
$B_j$ is contained in some $A_i$. Now, we need the following
lemma.

\begin{lem}Each of connected component of
$\mathcal{C}(G\setminus\{x,y\})$ contains at most one component of
$\mathcal{C}_{x,y}$.
\end{lem}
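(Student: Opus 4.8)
The plan is to reduce the statement to a connectivity question about perfect matchings and then settle it by a planar separation argument. First I would record that the connected components of any cubical matching complex are detected by its $1$-skeleton: the vertices are the perfect matchings, and two of them span an edge exactly when their symmetric difference is the edge set of a single even elementary region (an elementary flip). Since every vertex of $\mathcal{C}_{x,y}$ is a perfect matching of $G\setminus\{x,y\}$ that restricts to the fixed alternating matching $\mu$ on the path $R\setminus\{x,y\}=v_1v_2\cdots v_{2m-2}$, and since deleting $\mu$ realizes the stated isomorphism $\mathcal{C}_{x,y}\cong\mathcal{C}(G\setminus R)$, the lemma is equivalent to saying that the inclusion $\mathcal{C}_{x,y}\hookrightarrow\mathcal{C}(G\setminus\{x,y\})$ is injective on $\pi_0$. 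Concretely, I would take matchings $M_1,M_2\supseteq\mu$ joined by a flip sequence inside $\mathcal{C}(G\setminus\{x,y\})$ and show they are already joined by a flip sequence passing only through matchings that contain $\mu$, i.e.\ inside $\mathcal{C}_{x,y}$; this forces the components $B_j$ containing them to coincide.

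The key step is to locate $M_1\triangle M_2$. Because both matchings match every path vertex $v_i$ by its $\mu$-edge, $M_1$ and $M_2$ agree on all of $R$, so $M_1\triangle M_2$ is a disjoint union of alternating even cycles that avoid every vertex of $R$; in particular it coincides with $M_1'\triangle M_2'$, where $M_i'=M_i\setminus\mu$ is the corresponding matching of $G\setminus R$, and it lives entirely in $G\setminus R$. I would then exploit planarity: since $e=xy$ lies on the outer region and $R$ is the bounded region across $e$, the faces $R$ and $R^*$ are adjacent along $e$, and no cycle built from edges of $G$ that omits $x$ and $y$ can separate two faces adjacent along $e$. Hence no cycle of $M_1\triangle M_2$ encircles $R$; together with the fact that these cycles miss the vertices of $R$, each of them bounds a disk whose interior faces are genuine faces of $G\setminus R$ as well.

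From here I would induct on the number of cycles of $M_1\triangle M_2$ (or on the total enclosed area), at each stage rotating an innermost alternating cycle $\gamma$. Rotating $\gamma$ turns the current matching into its symmetric difference with $\gamma$ while leaving $\mu$ untouched, because $\gamma$ misses the vertices $v_i$; and this rotation is realized by elementary flips of the even faces enclosed by $\gamma$, all of which lie in $G\setminus R$. Iterating carries $M_1$ to $M_2$ through matchings that never disturb $\mu$, so $M_1$ and $M_2$ lie in the same component of $\mathcal{C}_{x,y}$, which is exactly the assertion.

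I expect the last paragraph to be the real obstacle. One must verify that an innermost alternating cycle can be unwound by single even-region flips without ever being forced to flip a face incident to the path $R\setminus\{x,y\}$ (this is where ``$R$ is never enclosed'' does the work) and without meeting an odd interior face that admits no flip. I would deal with the parity point by using that $M_1'$ and $M_2'$ are honest perfect matchings of $G\setminus R$, so the disk bounded by $\gamma$ is evenly covered; the confinement of the whole argument to $G\setminus R$, secured by the planar separation observation, is precisely what prevents the opened region $R$ from creating the spurious identification of components that the lemma rules out.
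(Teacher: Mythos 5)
Your first two paragraphs are sound: since $M_1,M_2\supseteq\mu$, the symmetric difference $M_1\triangle M_2$ is indeed a disjoint union of alternating cycles avoiding the vertices of $R$, and the planar-separation observation (such a cycle cannot separate $R$ from $R^*$ without crossing $e$, hence never encircles $R$) is correct. The gap is exactly where you suspected it, in the third paragraph, and it is not repairable by the parity and confinement considerations you offer. The claim that an innermost alternating cycle $\gamma$ can be rotated by elementary flips of the even faces enclosed by $\gamma$ is false, even for bipartite graphs with all enclosed faces even and the enclosed disk evenly covered. The paper's own Example \ref{E:counter} refutes it: in the square-inside-a-square graph, the two corner edges joining the squares lie in no perfect matching, so the only admissible flip is on the inner square; the two hexagonal faces can never be flipped, because an alternating matching on a hexagon would have to use a forbidden corner edge. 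Consequently the outer $4$-cycle $\gamma$ --- an alternating cycle whose enclosed faces are all even and whose interior vertices are perfectly matched among themselves --- can never be unwound, and the two matchings differing on $\gamma$ lie in distinct components of the complex. Placing a copy of this configuration inside $G\setminus R$ (for instance, with the region $R$ disjoint from it) gives matchings $M_1,M_2\supseteq\mu$ on which your induction cannot even begin.

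The structural reason the argument cannot work in this form is that after the first paragraph you never use the one hypothesis that makes the lemma true: that $M_1$ and $M_2$ are joined by a flip sequence inside $\mathcal{C}(G\setminus\{x,y\})$. If the unwinding step were valid unconditionally, it would show that \emph{any} two vertices of $\mathcal{C}_{x,y}\cong\mathcal{C}(G\setminus R)$ are connected within $\mathcal{C}_{x,y}$, i.e.\ that $\mathcal{C}(G\setminus R)$ is connected for every planar graph --- contradicting Example \ref{E:counter} and the Jockusch example, whose complexes are disjoint unions of segments. The lemma asserts only that distinct components of $\mathcal{C}_{x,y}$ cannot be merged inside the larger complex, and any proof must exploit the given connecting path. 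This is what the paper does: it takes $M',M''$ in different components of $\mathcal{C}_{x,y}$ but the same component of $\mathcal{C}(G\setminus\{x,y\})$ at minimal distance, fixes a shortest flip path $M'=M_0,M_1,\ldots,M_{n+1}=M''$ with flipped regions $R_0,R_1,\ldots,R_n$, and derives a contradiction with minimality: each $R_i$ must share an edge with one of $R,R_0,\ldots,R_{i-1}$ (else its flip could be pulled to the front, shortening the path), $R_0$ must share an edge $e'$ with $R$, and since $e'\in M'\cap M''$ but $e'\notin M_1$ and $R$ itself is never flipped, the region $R_0$ must be flipped again later, which leads (after a case analysis) to a strictly shorter path. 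If you want to salvage your write-up, the innermost-cycle induction should be replaced by this kind of shortest-path exchange argument applied to the given flip sequence.
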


\textit{Proof of Lemma: }Assume that a component of
$\mathcal{C}(G\setminus\{x,y\})$ contains two components of
$\mathcal{C}_{x,y}$. In that case, there are two vertices of
$\mathcal{C}_{x,y}$ (perfect matchings of $G$ that contain $xy$)
that are in different components of $\mathcal{C}_{x,y}$, but in
the same component of $\mathcal{C}(G\setminus\{x,y\})$. Assume
that $M'$ and $M''$ are two such vertices, chosen so that the
distance between them in $\mathcal{C}(G\setminus\{x,y\})$ is
minimal. Let
\begin{equation}\label{E:shorty}
M'=M_0 \frac{_{R_0}}{} M_1 \frac{\textrm{\,\,\,\, }}{}\ldots
\frac{\textrm{\,\,\,\, }}{}
 M_i \frac{_{R_i}}{}
M_{i+1} \frac{\textrm{\,\,\,\, }}{}\ldots \frac{\textrm{\,\,\,\,
}}{}M_{n}
 \frac{_{R_{n}}}{}M_{n+1}=M''
\end{equation}
 denote the shortest path from
$M'$ to $M''$ in $\mathcal{C}(G\setminus\{x,y\})$. The perfect
matching $M_{i+1}$ is obtained from $M_i$ by removing the edges of
$M_i$ contained in an elementary region $R_i$, and by inserting
the complementary edges. In other words, we have that
$M_{i+1}=M_i\triangle R_i$, for an elementary region $R_i$
contained in $\mathcal{R}_{F_i}\cap \mathcal{R}_{F_{i+1}}$.

Note that $R_0$ must be adjacent (share the common edge) with $R$.
Otherwise, both of vertices $M_0$ and $M_1$ belong to the same
component of $\mathcal{C}_{x,y}$, and we obtain a contradiction
with the assumption that the path described in (\ref{E:shorty}) is
minimal.

 In a similar way, we obtain that for any $i=1,2,\ldots,n$, the
region $R_i$ must be adjacent with at least one of regions $R,
R_0, R_1,\ldots,R_{i-1}$. If not, we have that the perfect
matching $\overline{M}=M_0\triangle R_i$ belongs to
$\mathcal{C}_{x,y}$, and $\overline{M}$ and $M'$ are contained in
the same component of $\mathcal{C}_{x,y}$. In that case we obtain
a contradiction, because the path
$$\overline{M}=\overline{M}_0 \frac{_{R_0}}{}
\overline{M}_1 \frac{\textrm{\,\,\,\, }}{}\ldots
\frac{\textrm{\,\,\,\, }}{}
 \overline{M}_{i-1} \frac{_{R_{i-1}}}{}
\overline{M}_{i+1} \frac{_{R_{i+1}}}{}\ldots
\frac{\textrm{\,\,\,\, }}{}\overline{M}_{n}
 \frac{_{R_{n}}}{}\overline{M}_{n+1}=M''$$
 is shorter than (\ref{E:shorty}). Here we let that
$\overline{M}_{j+1}=\overline{M}_j\triangle R_j$.

Let $e'$ denote a common edge of regions $R_0$ and $R$ that is
contained in $M'$. Note that $e'$ is not contained in $M_1$.
However, this edge is again contained in $M''$, and we conclude
that the region $R_0$ has to reappear again in (\ref{E:shorty}).

Let $R_{i_0}=R_0$ denote the first appearance of $R_0$ in
(\ref{E:shorty}) after the first step.
 There are the following
three possible situations that enable the reappearance of $R_0$:
\begin{enumerate}
    \item[$(a)$] All regions $R_k$ (for $k$ between $0$ and $i_0$) are disjoint
    with $R_0$.\\
In that case, we can omit the steps in (\ref{E:shorty}) labelled
by $R_0$ and $R_{i_0}$, and obtain a shorter path between $M'$ and
$M''$.

    \item[$(b)$] Any region that shares at least one edge with $R_0$
    appears an odd number of times between $R_0$ and $R_{i_0}$.
\\
This is impossible, because $R$ (that share an edge with $R_0$)
can not appear in (\ref{E:shorty}).

    \item[$(c)$] There is $t<i_0$
    such that $R_t=\bar{R}$ shares an edge with
$R_0$, but the fragment of the sequence (\ref{E:shorty})
    between $R_0$ and $R_{i_0}$ does not contain all
    region that shares an edge with $R_0$.
\\
Then the same region $\bar{R}$ has to appear again as $R_s$, for
some $s$ such that $t<s<i_0$.
 Again, if all regions $R_j$ are
disjoint with $\bar{R}$ (for $j=t+1,\ldots,s-1$), we can omit
$R_t$ and $R_s$, and obtain a contradiction. If not, there exist
indices $t'$ and $s'$ such that $t<t'<s'<s$ and $R_t'=R_s'$. We
continue in the same way, and from the finiteness of the path,
obtain a shorter path than (\ref{E:shorty}).
\end{enumerate}

 \qed

\textit{Continue of Proof:}
 We built $\mathcal{C}(G)$ by starting
with $\mathcal{C}(G\setminus e)$, that is a union of contractible
complexes by assumption. Then we glue the components of $
Prism(\mathcal{C}(G\setminus R))$ one by one.

After that, we glue all components of $\mathcal{C}(G\setminus
\{x,y\})$. At each step we are gluing two contractible complexes
along a contractible subcomplex, or we just add a new contractible
complex, disjoint with previously added components. From the
Gluing Lemma (see Lemma 10.3 in \cite{TM}) we obtain that
$\mathcal{C}(G)$ is contractible, or a disjoint union of
contractible complexes.

\end{proof}

\begin{rem}\label{R:whenCiscont}

 For a connected bipartite planar graph $G$ that
satisfy the condition $(**)$, the cubical matching complex
$\mathcal{C}(G)$ is collapsible,
 see
Theorem \ref{T:EHr}. The planar graph on the right side on Figure
\ref{F:primjerigener}
 satisfies the condition $(**)$, but
the corresponding cubical complex is not collapsible, it is a
union of three disjoint edges. So, there is a natural question:

 \textit{Is there a property of $G$ that provides the
collapsibility of its cubical complex  $\mathcal{C}(G)$?}
Obviously, if all complexes that appear on the right-hand side of
(\ref{E:RRforC}) are nonempty and contractible, then
$\mathcal{C}(G)$ is contractible.
\end{rem}

\section{The $f$-vector of domino tilings}

 The concept of tilings of a bipartite planar graph
generalizes the notion of domino tilings.
 Let $\mathcal{R}$ be a simple connected region, compound of unit squares in
the plane, that can be tiled with domino tiles $1\times 2$ and
$2\times 1$. The set of all tilings of $\mathcal{R}$ by domino
tiles and $2\times 2$ squares defines a cubical complex, denoted
by $\mathcal{C}(\mathcal{R})$. If we consider $\mathcal{R}$ as a
planar graph (all of its elementary regions are unit squares), and
if $G$ denotes the weak dual graph of $\mathcal{R}$ (the unit
squares of $\mathcal{R}$ are vertices of $G$), then
$\mathcal{C}(\mathcal{R})$ is isomorphic to the cubical matching
complex $\mathcal{C}(G)$, see Section 3 in \cite{EhrCUB} for
details. Note that the number of $i$-dimensional faces of
$\mathcal{C}(G)$ counts the number of tilings of $\mathcal{R}$
with exactly $i$ squares $2\times 2$.

Ehrenborg used collapsibility of $\mathcal{C}(G)$ to conclude (see
Corollary 3.1. in \cite{EhrCUB}) that the entries of $f$-vector of
$f(\mathcal{C}(G))=(f_0,f_1,\ldots,f_d)$ satisfy
\begin{equation}\label{E:linf}
 f_0-f_1+f_2-\cdots+(-1)^df_d=1.
\end{equation}

If $G$ is the weak dual graph of a region $\mathcal{R}$ that
admits a domino tiling, then all complexes that appear on the
right-hand side of the relation (\ref{E:RRforC}) are contractible
by induction, and therefore $\mathcal{C}(G)$ is contractible, see
Remark \ref{R:whenCiscont}. So, we obtain that the relation
(\ref{E:linf}) is true in any case, disregarding possible problems
with Theorem \ref{T:EHr}. In this Section we will prove that
(\ref{E:linf}) is the only linear relation for $f$-vectors of
cubical complexes of domino tilings.
\\

For all $n\in \mathbb{N}$, we let $G_n$ denote the following graph
$\begin{tabular}{|c|c|c|c|c|c|}
  \hline
  % after \\: \hline or \cline{col1-col2} \cline{col3-col4} ...
  $1$ & $2$ & \,\,\, &\,\,\, &\,\,\,  & $n$ \\
  \hline
\end{tabular}.$

\noindent This graph (also known as the ladder graph) has $2n+2$
vertices, $3n+1$ edges and $n$ elementary regions (squares). For
$i=1,2,\ldots,n$, let $G_{n,i}$ denote the graph obtained by
adding one unit square below the $i$-th square of $G_n$. Now, we
describe some recursive relations for $f$-vectors of
$\mathcal{C}(G_{n})$ and $\mathcal{C}(G_{n,i})$.

\begin{prop}\label{P:rrforf}The entries of $f$-vectors of
 $\mathcal{C}(G_{n})$ and
$\mathcal{C}(G_{n,i})$ satisfy the following recurrences:
\begin{equation}\label{F:RRforf-vector}
f_i(\mathcal{C}(G_{n+2}))=f_i(\mathcal{C}(G_{n+1}))+
f_i(\mathcal{C}(G_{n}))+f_{i-1}(\mathcal{C}(G_{n})),
\end{equation}
\begin{equation}\label{F:RRforf-vectorleft}
    f_i(\mathcal{C}(G_{n+2,i}))=f_i(\mathcal{C}(G_{n+1,i}))+
f_i(\mathcal{C}(G_{n,i}))+f_{i-1}(\mathcal{C}(G_{n,i})),
\end{equation}
\begin{equation}\label{F:RRforf-vectorright}
f_i(\mathcal{C}(G_{n+2,i}))=f_i(\mathcal{C}(G_{n+1,i-1}))+
f_i(\mathcal{C}(G_{n,i-2}))+f_{i-1}(\mathcal{C}(G_{n,i-2})).
\end{equation}

\end{prop}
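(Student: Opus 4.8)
The plan is to prove all three recurrences by the same elementary peeling argument: decompose the set of tilings according to how an \emph{end} square is covered, and read off each $f$-vector identity by tracking the number of $2\times 2$ blocks. Throughout I adopt the convention $f_{-1}(\cdot)=0$ (which absorbs the $i=0$ boundary term), and I label the squares of a ladder $R_1,\dots,R_m$ from left to right, so that $R_j$ occupies the vertex columns $j-1$ and $j$. Deleting column $m$ from $G_m$ leaves $G_{m-1}$, and deleting columns $m-1,m$ leaves $G_{m-2}$; the analogous deletions at the left end renumber the remaining squares by $R_j\mapsto R_{j-1}$ or $R_j\mapsto R_{j-2}$.

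The key observation is a degree count at an end column. Let $a,b$ be the top and bottom vertices of the rightmost column of $G_{n+2}$, and let $c,d$ be the top and bottom vertices of the adjacent column, so that $R_{n+2}=\{a,b,c,d\}$. Since $a$ is joined only to $b$ and $c$, and $b$ only to $a$ and $d$, every tiling lies in exactly one of three cases: (i) the rung $ab$ is a matched edge; (ii) both horizontal edges $ac$ and $bd$ are matched edges; (iii) $R_{n+2}$ is used as a square block. These are exhaustive and pairwise disjoint. In case (i), deleting $a,b$ leaves a tiling of $G_{n+1}$ with the same number of squares; in case (ii), deleting $a,b,c,d$ leaves a tiling of $G_n$ with the same number of squares; in case (iii), removing the block leaves a tiling of $G_n$ with one fewer square. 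Counting by dimension yields precisely (\ref{F:RRforf-vector}).

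For (\ref{F:RRforf-vectorleft}) I would apply the identical analysis to the right end of $G_{n+2,i}$. Provided $i\le n$, the appended square sits strictly to the left of the columns being removed, so it is untouched by the decomposition; the three cases reduce respectively to tilings of $G_{n+1,i}$ (same number of squares), $G_{n,i}$ (same number), and $G_{n,i}$ (one fewer square), which is (\ref{F:RRforf-vectorleft}). For (\ref{F:RRforf-vectorright}) I would instead peel the \emph{left} end of $G_{n+2,i}$: the same three-case split applies to the two vertices of the leftmost column, but now the deletions shift the position of the appended square. Removing the single leftmost column produces a tiling of $G_{n+1,i-1}$, while removing the two leftmost columns produces a tiling of $G_{n,i-2}$ (with the same number of squares in case (ii), one fewer in case (iii)), giving (\ref{F:RRforf-vectorright}); this is valid whenever $i\ge 3$, so that the appended square is not disturbed.

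The step requiring the most care is the bookkeeping in the last recurrence: one must verify that left-peeling genuinely leaves the appended-square structure intact, so that each remaining graph is again of the form $G_{m,j}$ rather than something exotic, and that the index shift $i\mapsto i-1,\,i-2$ is recorded correctly. The degree-two property at the end columns is the crux that makes the case analysis exhaustive and converts the geometric peeling into the stated three-term recurrences; once it is in place, everything else is routine counting.
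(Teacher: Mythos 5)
Your proof is correct and is essentially the paper's own argument: the paper obtains all three recurrences by applying the decomposition (\ref{E:RRforC}) with $e$ and $R$ chosen as the vertical edge and unit square at the right end (for the first two relations) and at the left end (for the third), which is exactly your three-case peel --- rung matched, horizontals matched, or square used as a block --- depicted geometrically in Figure \ref{Figure:rrfor}. Your explicit disjoint classification of tilings, dimension bookkeeping, and the validity ranges $i\le n$ and $i\ge 3$ are precisely the content the paper leaves implicit in that figure.
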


\begin{proof}
All formulas follow from relation (\ref{E:RRforC}), see the proof
of Theorem \ref{T:generalcase}. To obtain the formula
(\ref{F:RRforf-vector}), we apply (\ref{E:RRforC}) on $G_{n+2}$.
The rightmost vertical edge and the rightmost unit square in
$G_{n+2}$ act as $e$ and $R$ in (\ref{E:RRforC}).

\begin{figure}[htbp]
\begin{center}
\begin{tikzpicture}[scale=.491]{center}
\draw[ultra thick] (-4,-4) grid (1,-3);\node at (1.5,-3.5) {$=$};
\node at (7.5,-3.5) {$\sqcup$};\node at (13.5,-3.5) {$\sqcup$};
 \draw[ultra thick] (2,-4) grid (6,-3);
\draw[ultra thin] (2,-4) grid (7,-3); \draw[ultra thick] (7,-4) --
(7,-3); \draw[ultra thick] (12,-4) -- (13,-4); \draw[ ultra thick]
(12,-3) -- (13,-3);

  \draw[ultra thick]
(14,-4) grid (17,-3); \draw[ultra thin] (14,-4) grid (19,-3);
  \draw[ultra thick]
(18,-4) grid (19,-3); \draw[ultra thick] (8,-4) grid
(11,-3);\draw[ultra thin] (8,-4) grid (13,-3);

%fff

\draw[ultra thick] (-4,-1) grid (1,0);\node at (1.5,-.5) {$=$};
\node at (7.5,-.5) {$\sqcup$};\node at (13.5,-.5) {$\sqcup$};
 \draw[ultra thick] (2,-1) grid (6,0);
\draw[ultra thin] (2,-1) grid (7,0); \draw[ultra thick] (7,-1) --
(7,0); \draw[ultra thick] (12,-1) -- (13,-1); \draw[ ultra thick]
(12,0) -- (13,0);

  \draw[ultra thick]
(14,-1) grid (17,0); \draw[ultra thin] (14,-1) grid (19,0);
  \draw[ultra thick]
(18,-1) grid (19,0); \draw[ultra thick] (8,-1) grid
(11,0);\draw[ultra thin] (8,-1) grid (13,0);

 \draw[ultra thick] (-2,-7) grid (-1,-8);
 \draw[ultra thick] (16,-7) grid (17,-8);
  \draw[ultra thick] (10,-7) grid (11,-8);
   \draw[ultra thick] (4,-7) grid (5,-8);
%ggg

\draw[ultra thick] (-4,-6) grid (1,-7);\node at (1.5,-6.5) {$=$};
\node at (7.5,-6.5) {$\sqcup$};\node at (13.5,-6.5) {$\sqcup$};
 \draw[ultra thick] (3,-7) grid (7,-6);
\draw[ultra thin] (2,-7) grid (7,-6); \draw[ultra thick] (2,-7) --
(2,-6); \draw[ultra thick] (8,-7) -- (9,-7); \draw[ ultra thick]
(8,-6) -- (9,-6);

  \draw[ultra thick]
(16,-7) grid (19,-6); \draw[ultra thin] (14,-7) grid (19,-6);
  \draw[ultra thick]
(14,-7) grid (15,-6); \draw[ultra thick] (10,-7) grid
(13,-6);\draw[ultra thin] (8,-7) grid (13,-6);
 \draw[ultra thick] (-2,-4) grid (-1,-5);
 \draw[ultra thick] (-2,-6) grid (-1,-7);
 \draw[ultra thick] (16,-4) grid (17,-5);
  \draw[ultra thick] (10,-4) grid (11,-5);
   \draw[ultra thick] (4,-4) grid (5,-5);

\node at (-6,-.5) {$(5)$};\node at (-6,-3.5) {$(6)$};\node at
(-6,-6.5) {$(7)$};
\end{tikzpicture}
\end{center}
\caption{The ``geometric proof`` of recursive relations for
$f(\mathcal{C}(G_{n}))$ and $f(\mathcal{C}(G_{n,i}))$.}
\label{Figure:rrfor}
\end{figure}
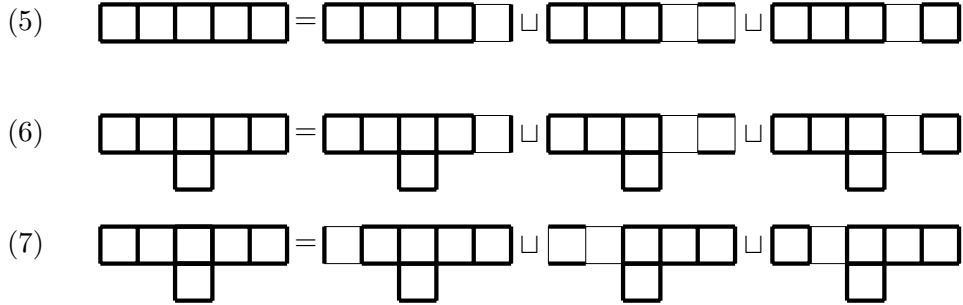

In the same way we can prove the remaining two relations. For each
relation, we choose an adequate elementary region $R$, a
corresponding edge $e$ of $R$, and use relation (\ref{E:RRforC}),
see Figure \ref{Figure:rrfor}.

\end{proof}

\noindent The $f$-vector
$(f_0,f_1,f_2,\ldots,f_{\lceil\frac{n}{2}\rceil})$ of
 $\mathcal{C}(G_{n})$ can be encoded by the polynomial
 $F_n$:
$$F_n=F_{\mathcal{C}(G_{n})}(x)=f_0+f_1x+f_2x^2+\cdots
+f_{\lceil\frac{n}{2}\rceil}x^{\lceil\frac{n}{2}\rceil}.$$
Similarly, we define the polynomials $F_{n,i}$ to encode the
$f$-vector of $\mathcal{C}(G_{n,i})$. Directly from
(\ref{F:RRforf-vector}) and (\ref{F:RRforf-vectorleft}) we obtain
that
$$F_{n+2}(x)=F_{n+1}(x)+(x+1)F_{n}(x),\hspace{.2cm}
 F_{n+2,i}(x)=F_{n+1,i}(x)+(x+1)F_{n,i}(x).$$
 Now, we define new polynomials $P_n$ and $P_{n,i}$ by
 $$P_n=P_n(x)=F_n(x-1),\hspace{.4cm}
  P_{n,i}=P_{n,i}(x)=F_{n,i}(x-1).$$ This is a variant of
  $h$-polynomial associated to corresponding
 cubical complexes.\\ From Proposition \ref{P:rrforf} it
 follows that the polynomials $P_n$ and $P_{n,i}$ satisfy
 the following recurrences

 \begin{equation}\label{F:RRforh-vector nolmal}
 P_{n+2}(x)=P_{n+1}(x)+xP_{n}(x),
\end{equation}
\begin{equation}\label{FFrrn_RIGHT}
P_{n+2,i}(x)=P_{n+1,i}(x)+xP_{n,i}(x),
\end{equation}
\begin{equation}\label{F:RRforh-vector_LEFT}
P_{n+2,i}(x)=P_{n+1,i-1}(x)+xP_{n,i-2}(x).
\end{equation}

\begin{rem}\label{R:exact}
We can use (\ref{F:RRforh-vector nolmal}) to obtain the
polynomials $P_n$ explicitly
$$P_{2d-1}={d\choose d}x^d+\cdots+{d+k\choose
d-k}x^k+\cdots+{2d-1\choose 1}x+{2d \choose 0}\textrm{, and}
$$
$$P_{2d}={d+1\choose d}x^d+\cdots+{d+k+1\choose
d-k}x^k+\cdots+{2d\choose 1}x+{2d+1 \choose 0}.
$$
\vspace{.2cm}

Note that the polynomials $P_n$ are related with Fibonacci
polynomials, see Section 9.4 in \cite{Benj-Quinn} for the
definition and a combinatorial interpretation of coefficients. The
coefficient of these polynomials are positive integers and the sum
of coefficients of $P_n$ is a Fibonacci number. Note that this is
just the number of vertices in $\mathcal{C}(G_n)$.

Assume that we embedded $\mathcal{C}(G_n)$ into $n$-cube as in
Proposition \ref{P:cord}, so that the perfect matching
$M_0=\begin{tabular}{|c|c|c|c|c|c|} \hdashline
  % after \\: \hline or \cline{col1-col2} \cline{col3-col4} ...
   &  &  &  &  &  \\
   \hdashline
\end{tabular}$  of $G_n$ is the vertex in the origin. Now, the
coefficient of $x^k$ in $P_n$ counts the number of vertices of
$\mathcal{C}(G_n)$ for which the sum of coordinates is $k$, i.e.,
it is the number of vertices of $\mathcal{C}(G_n)$ whose distance
from $M_0$ is $k$.

Also, following \cite{Benj-Quinn}, we can recognize the
coefficient of $x^k$ in $P_n$ as the number of $k$-element subsets
of $[n]$ that do not contain two consecutive integers. Similarly,
we can interpret the coefficient of $x^k$ in $P_{n,i}$ as the
number of $k$-element subsets of the multiset
$M=\{1,2,\ldots,i-1,i,i, i+1,\ldots,n\}$ that do not contain two
consecutive integers. Note that the multiplicity of $i$ in $M$ is
two, and all other elements have the multiplicity one.

\end{rem}

\begin{defn}\label{D:opA}
Let $\mathcal{P}^d$ denote the vector
 space of all polynomials of degree at
 most $d$. We define the linear map
$A_d: \mathcal{P}^d\rightarrow \mathcal{P}^{d+1}$ recursively by

\begin{equation}\label{F:Arekrel}
    A_d(x^k)=xA_{d-1}(x^{k-1}) \textrm{ for all }k >0,
\end{equation}
\begin{equation}\label{F:A1}
   A_0(1)=1+2x\textrm{ and }A_d(1)=P_{2d+1}-A_d(P_{2d-1}-1).
\end{equation}

\end{defn}

\begin{lem}\label{L:0}
For any non-negative integer $d$, we have that
$$A_d(P_{2d-1})=P_{2d+1},A_d(P_{2d})=P_{2d+2} \textrm{ and }
 A_{d+1}(P_{2d}) =P_{2d+2}.$$
\end{lem}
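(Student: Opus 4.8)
The plan is to reduce all three identities to two inputs: the defining relation \eqref{F:A1} for $A_d$ on the constant polynomial, and the Fibonacci-type recurrence $P_{n+2}=P_{n+1}+xP_n$ coming from \eqref{F:RRforh-vector nolmal}, which I will use repeatedly to split and recombine the $P_n$. First I would dispose of the identity $A_d(P_{2d-1})=P_{2d+1}$, which is little more than a reformulation of \eqref{F:A1}. Since the constant term of $P_{2d-1}$ equals $1$, I write $P_{2d-1}=1+(P_{2d-1}-1)$ and use linearity:
$$A_d(P_{2d-1})=A_d(1)+A_d(P_{2d-1}-1)=\bigl(P_{2d+1}-A_d(P_{2d-1}-1)\bigr)+A_d(P_{2d-1}-1)=P_{2d+1}.$$
No further work is needed here.

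Next I would record the one computational lemma that drives the rest, a \emph{shift rule}: for every $m\ge 1$ and every $Q\in\mathcal{P}^{m-1}$ one has $A_m(xQ)=xA_{m-1}(Q)$. This is immediate from \eqref{F:Arekrel}: writing $Q=\sum_i q_i x^i$ and applying $A_m$ to $xQ=\sum_i q_i x^{i+1}$ termwise, each summand satisfies $A_m(x^{i+1})=xA_{m-1}(x^i)$, and resumming gives the claim. (The same iteration of \eqref{F:Arekrel} also shows $A_d(x^k)=x^kA_{d-k}(1)$, which is a useful sanity check on degrees.)

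With these in hand, the second identity $A_d(P_{2d})=P_{2d+2}$ falls to induction on $d$. Applying $A_d$ to the split $P_{2d}=P_{2d-1}+xP_{2d-2}$, the first summand gives $A_d(P_{2d-1})=P_{2d+1}$ by the identity just proved, while the shift rule turns the second into $xA_{d-1}(P_{2d-2})$, which the inductive hypothesis (the statement at level $d-1$, i.e. $A_{d-1}(P_{2d-2})=P_{2d}$) evaluates to $xP_{2d}$. Thus $A_d(P_{2d})=P_{2d+1}+xP_{2d}=P_{2d+2}$, the last equality being the recurrence once more; the base case is the direct check $A_0(P_0)=A_0(1)=1+2x=P_2$.

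The third identity $A_{d+1}(P_{2d})=P_{2d+2}$ is where the real idea is needed, and I expect it to be the main obstacle: the same splitting fails because $A_{d+1}$ and $A_d$ do \emph{not} agree on $P_{2d-1}$ in general, so one cannot simply transfer the previous argument to the larger operator. Instead I would apply $A_{d+1}$ to a Fibonacci relation that \emph{straddles} levels $d$ and $d+1$, namely $P_{2d+1}=P_{2d}+xP_{2d-1}$. The left-hand side is $A_{d+1}(P_{2d+1})=P_{2d+3}$ by the first identity at level $d+1$; the second summand on the right becomes $xA_d(P_{2d-1})=xP_{2d+1}$ by the shift rule and the first identity at level $d$. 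Solving for the remaining term yields $A_{d+1}(P_{2d})=P_{2d+3}-xP_{2d+1}$, which equals $P_{2d+2}$ by a final use of the recurrence. Because this derivation invokes the first identity at two consecutive levels, the only thing left is to check the smallest values of $d$ directly, after which all three statements follow together.
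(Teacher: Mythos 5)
Your proof is correct and follows essentially the same route as the paper: the first identity is read off from the definition \eqref{F:A1}, the second follows by splitting $P_{2d}=P_{2d-1}+xP_{2d-2}$ and using \eqref{F:Arekrel} with induction, and your treatment of the third (applying $A_{d+1}$ to $P_{2d+1}=P_{2d}+xP_{2d-1}$ and solving) is just an algebraic rearrangement of the paper's computation $A_{d+1}(P_{2d})=A_{d+1}(P_{2d+1}-xP_{2d-1})=P_{2d+3}-xP_{2d+1}=P_{2d+2}$. The only differences are presentational: you isolate the shift rule $A_m(xQ)=xA_{m-1}(Q)$ as an explicit lemma and state the base case $A_0(P_0)=1+2x=P_2$, both of which the paper leaves implicit.
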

\begin{proof}
From (\ref{F:A1}) it follows that $A_d(P_{2d-1})=P_{2d+1}$. For
the proof of the second formula we use (\ref{F:RRforh-vector
nolmal}), (\ref{F:Arekrel}) and induction
$$A_d(P_{2d})=A_d(P_{2d-1}+xP_{2d-2})=
P_{2d+1}+xA_{d-1}(P_{2d-2})=P_{2d+1}+xP_{2d}=P_{2d+2}.$$

The last formula in this lemma follows from (\ref{F:RRforh-vector
nolmal}) and earlier proved formulas
$$A_{d+1}(P_{2d})=A_{d+1}(P_{2d+1}-xP_{2d-1})=$$
$$P_{2d+3}-xA_d(P_{2d-1})=P_{2d+3}-xP_{2d+1}=P_{2d+2}.$$

\end{proof}

\begin{lem}\label{L:1}
For all integers $i$ and $d$ such that $1\leq i\leq \lfloor\frac d
2 \rfloor$, the following holds:
$$A_d(P_{2d-1,i})=P_{2d+1,i}\textrm{ and }
 A_d(P_{2d,i})=P_{2d+2,i}.$$
\end{lem}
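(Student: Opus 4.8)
The plan is to prove both identities simultaneously by induction on the pair $(d,i)$, ordered lexicographically, using Lemma \ref{L:0} to anchor the induction at $i=0$. The one structural fact about $A_d$ that I use repeatedly is the shift identity
\[
A_d(xQ)=xA_{d-1}(Q)\qquad\text{for every }Q\in\mathcal{P}^{d-1},
\]
which is immediate from (\ref{F:Arekrel}) and linearity; combined with (\ref{F:Arekrel}) it also gives $A_d(x^k)=x^kA_{d-k}(1)$, so that $A_d$ lowers its index whenever a factor of $x$ is pulled out. With the convention $P_{n,0}:=P_n$, the base line $i=0$ is exactly Lemma \ref{L:0}: $A_d(P_{2d-1,0})=A_d(P_{2d-1})=P_{2d+1}=P_{2d+1,0}$ and $A_d(P_{2d,0})=A_d(P_{2d})=P_{2d+2}=P_{2d+2,0}$.

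For the inductive step I would push \emph{both} identities through the LEFT recurrence (\ref{F:RRforh-vector_LEFT}) rather than (\ref{FFrrn_RIGHT}); this is the decisive choice, because it decreases $i$ and keeps every index that appears inside the admissible range $1\le i\le\lfloor d/2\rfloor$. For the first identity, apply $A_d$ to $P_{2d-1,i}=P_{2d-2,i-1}+xP_{2d-3,i-2}$ and use the shift identity to obtain $A_d(P_{2d-1,i})=A_d(P_{2d-2,i-1})+xA_{d-1}(P_{2d-3,i-2})$. The second summand equals $P_{2d-1,i-2}$ by the inductive hypothesis for the first identity at $(d-1,i-2)$. The first summand $A_d(P_{2d-2,i-1})$ is \emph{off level} (the operator index exceeds the natural one), so I evaluate it through (\ref{FFrrn_RIGHT}): writing $P_{2d-2,i-1}=P_{2d-1,i-1}-xP_{2d-3,i-1}$ and applying $A_d$ gives $P_{2d+1,i-1}-xP_{2d-1,i-1}=P_{2d,i-1}$, where the two values of $A$ come from the hypothesis at the strictly smaller indices $(d,i-1)$ and $(d-1,i-1)$, and the last equality is again (\ref{FFrrn_RIGHT}). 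Substituting back and recombining by (\ref{F:RRforh-vector_LEFT}) in the form $P_{2d+1,i}=P_{2d,i-1}+xP_{2d-1,i-2}$ yields $A_d(P_{2d-1,i})=P_{2d+1,i}$.

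The second identity is handled the same way and is cleaner, since it needs no off-level correction: applying $A_d$ to $P_{2d,i}=P_{2d-1,i-1}+xP_{2d-2,i-2}$ gives $A_d(P_{2d-1,i-1})+xA_{d-1}(P_{2d-2,i-2})=P_{2d+1,i-1}+xP_{2d,i-2}$, using the first identity at $(d,i-1)$ and the second identity at $(d-1,i-2)$, and this equals $P_{2d+2,i}$ by (\ref{F:RRforh-vector_LEFT}). Note that the $(d-1)$-instance is invoked at index $i-2$, which is exactly what keeps it inside $1\le i-2\le\lfloor(d-1)/2\rfloor$; this is why the RIGHT recurrence, which would instead require the $(d-1)$-identity at the \emph{same} index $i$ and break down at the top value $i=\lfloor d/2\rfloor$ for even $d$, is avoided.

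The main obstacle is the bookkeeping at the boundary of the index range, not the algebra. I must check that every invoked instance genuinely satisfies $1\le(\text{its index})\le\lfloor\tfrac{\cdot}{2}\rfloor$ for its value of $d$ (the estimate $\lfloor d/2\rfloor-1\le\lfloor(d-1)/2\rfloor$ makes this work), and I must settle the bottom row $i=1$ together with the degenerate indices $i-1=0$ and $i-2\in\{-1,0\}$ produced by (\ref{F:RRforh-vector_LEFT}). The off-level term at $i-1=0$ is harmless, because $A_d(P_{2d-2,0})=A_d(P_{2d-2})=P_{2d}$ is precisely the third formula of Lemma \ref{L:0} with $d$ replaced by $d-1$; the remaining degenerate terms I would dispose of by fixing the natural conventions dictated by the multiset count in Remark \ref{R:exact} (or, for the single row $i=1$, by a short direct induction on $d$ anchored at $d=2$). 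A final routine verification is that all polynomials in sight have degree at most $d$, so that $A_d$ is legitimately applied throughout.
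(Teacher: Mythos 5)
Your core induction is correct, but it is genuinely routed differently from the paper's proof, and in one respect it is better. The paper uses the same ingredients (Lemma \ref{L:0}, the shift identity $A_d(xQ)=xA_{d-1}(Q)$, induction on $i$ with base cases at small $i$), and its step for the second identity is exactly yours: apply $A_d$ to (\ref{F:RRforh-vector_LEFT}). But for the first identity the paper goes through (\ref{FFrrn_RIGHT}) instead, writing $A_d(P_{2d-1,i+1})=A_d(P_{2d,i+1})-xA_{d-1}(P_{2d-2,i+1})$, so it invokes the second identity at $(d-1,i+1)$ --- same index, smaller $d$. This is precisely the instance you flagged: when $d$ is even and $i+1=\lfloor d/2\rfloor$, it falls outside the range $1\le i+1\le\lfloor (d-1)/2\rfloor$ of the inductive hypothesis, so the paper's induction, read literally within the lemma's stated range, does not close at the top index. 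Your rerouting --- both identities through (\ref{F:RRforh-vector_LEFT}), with the off-level term $A_d(P_{2d-2,i-1})$ evaluated via (\ref{FFrrn_RIGHT}) at the strictly smaller instances $(d,i-1)$ and $(d-1,i-1)$ --- keeps every invoked instance inside the admissible range thanks to $\lfloor d/2\rfloor-1\le\lfloor(d-1)/2\rfloor$, and the lexicographic induction is well-founded.

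The gap is at the bottom of the induction, which you leave as a sketch. The case $i=1$ cannot be pushed through (\ref{F:RRforh-vector_LEFT}) at all (it produces the undefined $P_{2d-3,-1}$), and the ``conventions dictated by the multiset count'' do not behave as you hope: adjoining a doubled element $0$ to $\{1,\ldots,n\}$ does not reproduce $P_n$ (already for $n=1$ it gives $1+3x\neq 1+2x$), and the correct degenerate form of the LEFT recurrence at $i=1$ is $P_{n+2,1}=P_{n+1}+2xP_n$, which is not an instance of (\ref{F:RRforh-vector_LEFT}) under your convention $P_{n,0}:=P_n$. What actually closes the bottom cases --- and what the paper does --- is a separate geometric relation obtained by deleting the bump square via (\ref{E:RRforC}): $P_{n,1}=P_n+xP_{n-2}$ and $P_{n,2}=P_n+xP_{n-3}$. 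With these, Lemma \ref{L:0} settles $i=1$ (and $i=2$) in one line, and combined with the recurrence $P_{n+2}=P_{n+1}+xP_n$ they also justify the $i=2$ instance of (\ref{F:RRforh-vector_LEFT}) with $P_{n,0}=P_n$ that your step needs. These relations are easy to supply, so your argument is completable, but as written the induction starts from cases it has not actually established.
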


\begin{proof}
For $i=1$ and $i=2$ we apply relation (\ref{E:RRforC}) in a
similar way as in the proof of Proposition \ref{P:rrforf}. We just
delete the only square in the second row of $G_{n,1}$ and
$G_{n,2}$, and obtain that
$$P_{2d-1,1}=P_{2d-1}+xP_{2d-3}, P_{2d-1,2}=P_{2d-1}+xP_{2d-4}.$$
By using Lemma \ref{L:0}, we obtain that
$$A_d(P_{2d-1,1})=A_d(P_{2d-1}+xP_{2d-3})=
P_{2d+1}+xP_{2d-1}=P_{2d+1,1}\textrm{, and }$$
$$A_d(P_{2d-1,2})=A_d(P_{2d-1}+xP_{2d-4})=
P_{2d+1}+xA_{d-1}(P_{2d-4})=$$$$ =P_{2d+1}+xP_{2d-2}=P_{2d+1,2}.$$
In a similar way, we can prove that
$$ A_d(P_{2d,1})=P_{2d+2,1}, A_d(P_{2d,2})=P_{2d+2,2}.$$
Assume that the statement of this lemma is true for $P_{2d-1,j}$
and $P_{2d,j}$ when $j<i+1$. Now, we use
(\ref{F:RRforh-vector_LEFT}) and induction to calculate
$$A_d(P_{2d,i+1})=A_d(P_{2d-1,i}+xP_{2d-2,i-1})=
A_{d}(P_{2d-1,i})+xA_{d-1}(P_{2d-2,i-1})=$$
$$=P_{2d+1,i}+xP_{2d,i-1}=P_{2d+2,i+1}.$$
From (\ref{FFrrn_RIGHT}) we obtain that
$$A_d(P_{2d-1,i+1})=A_d(P_{2d,i+1}-xP_{2d-2,i+1})=
A_{d}(P_{2d,i+1})-xA_{d-1}(P_{2d-2,i+1})=$$
$$=P_{2d+2,i+1}-xP_{2d,i+1}=P_{2d+1,i+1}.$$

\end{proof}

From Definition \ref{D:opA} and Remark
 \ref{R:exact} we can obtain
the concrete formula for the linear map $A_d$.
\begin{prop}\label{P:tacnoA_k}
For all $d,k \in \mathbb{N}$ such that $d\geq k\geq
 1$, we have that:
$$A_d(x^k)=x^k\left(1+2x-x^2+
2x^3-5x^4+14x^5-\cdots+(-1)^{d-k}C_{d-k}x^{d-k+1}\right).$$ Here
$C_m$ denotes the $m$-th Catalan number.
\end{prop}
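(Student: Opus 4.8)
The plan is to collapse the whole statement onto a single power‑series identity and then prove that identity by induction. First I would exploit the recurrence (\ref{F:Arekrel}): iterating $A_d(x^k)=xA_{d-1}(x^{k-1})$ exactly $k$ times gives $A_d(x^k)=x^kA_{d-k}(1)$ for $d\geq k\geq 1$. Thus the proposition is equivalent to the single family of identities
$$A_m(1)=1+2x-x^2+2x^3-\cdots+(-1)^mC_mx^{m+1}=:\Phi_m(x),\qquad m\geq 0,$$
since setting $m=d-k$ and multiplying by $x^k$ reproduces the asserted formula. So everything comes down to computing $A_m(1)$.

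Second, I would prove $A_m(1)=\Phi_m$ by induction on $m$. The case $m=0$ is the definition $A_0(1)=1+2x$ in (\ref{F:A1}). For the step I use the defining relation $A_m(1)=P_{2m+1}-A_m(P_{2m-1}-1)$. Writing $P_{2m-1}-1=\sum_{k=1}^m\binom{2m-k}{k}x^k$ (Remark \ref{R:exact}) and applying the first reduction together with the inductive hypothesis $A_{m-k}(1)=\Phi_{m-k}$, this becomes
$$A_m(1)=P_{2m+1}-\sum_{k=1}^m\binom{2m-k}{k}x^k\Phi_{m-k}.$$
Hence the step reduces to the polynomial identity $\sum_{k=0}^m\binom{2m-k}{k}x^k\Phi_{m-k}=P_{2m+1}$, where the $k=0$ summand is $\Phi_m$.

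Third, I would recast this analytically. Let $\phi(x)=\tfrac12+x+\tfrac12\sqrt{1+4x}=1+2x+\sum_{i\geq1}(-1)^iC_ix^{i+1}$ be the formal power series whose degree‑$(m+1)$ truncation is $\Phi_m$; the identification of its coefficients with $(-1)^iC_i$ follows from the Catalan generating function. Since $x^k\Phi_{m-k}$ is precisely the truncation of $x^k\phi$ to degree $\leq m+1$, and $\sum_{k=0}^m\binom{2m-k}{k}x^k=P_{2m-1}$, the identity of the previous paragraph is equivalent to the congruence
$$\phi(x)\,P_{2m-1}(x)\equiv P_{2m+1}(x)\pmod{x^{m+2}},\qquad m\geq 1,$$
which is the clean form I would actually establish.

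Finally, I would prove this congruence by induction on $m$, using the odd‑index recurrence $P_{2m+3}=(1+2x)P_{2m+1}-x^2P_{2m-1}$ (a consequence of (\ref{F:RRforh-vector nolmal})) together with the quadratic functional equation $\phi(\phi-1-2x)=-x^2$, equivalently $\phi-1-2x=-x^2/\phi$, verified directly from the closed form of $\phi$. Granting the congruence for $m$, the claim for $m+1$ amounts to $(\phi-1-2x)P_{2m+1}\equiv -x^2P_{2m-1}\pmod{x^{m+3}}$; replacing $\phi-1-2x$ by $-x^2/\phi$ reduces this to $P_{2m+1}\equiv\phi P_{2m-1}\pmod{x^{m+1}}$, which is exactly the inductive hypothesis read modulo $x^{m+1}$, the factor $x^2$ accounting for the drop in precision. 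The base case $m=1$ is a direct expansion. The genuine obstacle is precisely this congruence: one must recognize that the Catalan series $\phi$ obeys the \emph{same} quadratic relation $t^2-(1+2x)t+x^2=0$ that governs the odd‑indexed Fibonacci polynomials $P_{2m\pm1}$, and must track the truncation carefully so that replacing each $\Phi_{m-k}$ by $\phi$ is legitimate up to the cutoff $x^{m+2}$.
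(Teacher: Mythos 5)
Your argument is correct, and its first half coincides with the paper's: both reduce via (\ref{F:Arekrel}) to the single claim about $A_d(1)$, and both run an induction that starts from the defining relation $A_d(1)=P_{2d+1}-A_d(P_{2d-1}-1)$, expands $P_{2d-1}-1$ in the monomial basis, and applies the inductive hypothesis, arriving at the identity $\sum_{k=0}^{d}\binom{2d-k}{k}x^k\Phi_{d-k}=P_{2d+1}$ (where $\Phi_m$ is your degree-$(m+1)$ truncation of the Catalan series). The two proofs diverge at the key lemma used to establish this identity. The paper compares coefficients of each power $x^{k+1}$ and closes the computation with the binomial--Catalan convolution (\ref{E:Catibin}), $\binom{n}{k}=\sum_{i=0}^{k}(-1)^i\binom{n+1+i}{k-i}C_i$, which it justifies only in passing (induction plus Pascal's identity); its key ingredient is thus a finite combinatorial identity. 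You instead pass to the full Catalan generating function $\phi=\tfrac12+x+\tfrac12\sqrt{1+4x}$ and prove the congruence $\phi\,P_{2d-1}\equiv P_{2d+1}\pmod{x^{d+2}}$ by a second induction whose engine is the fact that $\phi$ is a root of the quadratic $t^2-(1+2x)t+x^2$, which is precisely the characteristic polynomial of the recurrence $P_{2d+3}=(1+2x)P_{2d+1}-x^2P_{2d-1}$; both of these facts are easily verified, your truncation bookkeeping (trading the factor $x^2$ against the drop in precision from $x^{d+2}$ to $x^{d+1}$, then multiplying by the unit $\phi$) is sound, and the base cases check out. Your route buys a structural explanation for why Catalan numbers appear at all: multiplication by $\phi$ realizes the index shift $P_{2d-1}\mapsto P_{2d+1}$ to the relevant precision because the Catalan series and the odd-indexed Fibonacci-type polynomials obey the same quadratic relation. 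The paper's route buys elementarity: it stays entirely within finite binomial algebra, with no formal power series, square roots, or invertibility arguments, at the price of resting on the unexplained convolution identity (\ref{E:Catibin}).
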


\begin{proof}From (\ref{F:Arekrel}) it is enough to prove that
\begin{equation}\label{E:trueA_d}
A_d(1)=1+2x-x^2+2x^3-5x^4+\cdots+(-1)^dC_dx^{d+1}.
\end{equation} For all
integers $n$ and $k$ such that $n \geq k \geq 1$ (by using the
induction and the Pascal's Identity), we can obtain the next
relation
\begin{equation}\label{E:Catibin}
{n\choose k}=\sum_{i=0}^{k}(-1)^i{n+1+i\choose k-i}C_i.
\end{equation}
Now, we assume that (\ref{E:trueA_d}) is true for all positive
integers less than $d$, and calculate $A_d(1)$ by definition:
$$A_d(1)=P_{2d+1}-A_d(P_{2d-1}-1)=$$
$$=\sum_{i=0}^{d+1}{2d+2-i\choose i}x^i-
\sum_{i=1}^{d}{2d-i\choose i}x^{i}A_{d-i}(1).$$ The coefficients
of $1,x$ and $x^2$ in $A_d(1)$ are respectively:
$${2d+2\choose 0}=1,
{2d+1\choose 1}-{2d-1\choose 1}=2, {2d\choose 2}-{2d-2\choose
2}-2{2d-1\choose 1}=-1.$$ For $k>1$ the coefficient of $x^{k+1}$
in the polynomial $A_d(1)$ is
$${2d+1-k\choose k+1}-
{2d-k-1\choose k+1}-2{2d-k\choose
k}-\sum_{i=1}^{k-1}(-1)^{i}{2d-k+i\choose k-i}C_i.$$ From
(\ref{E:Catibin}) we obtain that the coefficient of $x^{k+1}$ in
$A_d(1)$ is $(-1)^{k}C_{k}$.

\end{proof}
\begin{cor}\label{C:Ais11}
For any positive integer $d$ the linear map $A_d$ is injective.
\end{cor}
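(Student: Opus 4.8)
The plan is to read off injectivity directly from the explicit formula in Proposition \ref{P:tacnoA_k}. The crucial observation is that for every $k$ with $0 \le k \le d$, the lowest-degree monomial occurring in $A_d(x^k)$ is $x^k$, and it occurs with coefficient $1$. Indeed, Proposition \ref{P:tacnoA_k} (together with (\ref{E:trueA_d}) covering the case $k=0$) shows that $A_d(x^k)=x^k\bigl(1+2x-x^2+\cdots\bigr)$, and the parenthesized factor has constant term $1$, so no monomial of degree smaller than $k$ appears and the coefficient of $x^k$ equals $1$.

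First I would argue directly on a putative kernel element. Suppose $p=\sum_{k=0}^{d}a_kx^k$ satisfies $A_d(p)=0$ but $p\neq 0$, and let $k_0$ be the smallest index with $a_{k_0}\neq 0$. By linearity $A_d(p)=\sum_{k\geq k_0}a_kA_d(x^k)$. Since every $A_d(x^k)$ with $k\geq k_0$ has no monomial of degree below $k$, the only contribution to the coefficient of $x^{k_0}$ in $A_d(p)$ comes from the term $a_{k_0}A_d(x^{k_0})$, and by the observation above that coefficient equals $a_{k_0}$. Hence $a_{k_0}=0$, contradicting the choice of $k_0$; therefore $p=0$ and $A_d$ is injective.

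Equivalently, one may phrase this in matrix terms: with respect to the monomial bases $\{1,x,\dots,x^{d}\}$ of $\mathcal{P}^d$ and $\{1,x,\dots,x^{d+1}\}$ of $\mathcal{P}^{d+1}$, the matrix of $A_d$ has its entry in the row of degree $k$ and column of $x^k$ equal to $1$, while all entries in rows of degree below $k$ vanish. Thus its top $(d+1)\times(d+1)$ block is lower triangular with unit diagonal and hence has determinant $1$, forcing $A_d$ to have full column rank $d+1=\dim\mathcal{P}^d$. There is essentially no obstacle at this stage: the substantive work was already done in establishing the explicit formula of Proposition \ref{P:tacnoA_k}, and all that remains is to notice this triangular shape, so the corollary follows at once.
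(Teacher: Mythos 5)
Your proof is correct and follows exactly the route the paper intends: Corollary \ref{C:Ais11} is stated without proof as an immediate consequence of Proposition \ref{P:tacnoA_k}, and your observation that $A_d(x^k)$ has lowest-degree term $x^k$ with coefficient $1$ (using (\ref{E:trueA_d}) for $k=0$), hence that the matrix of $A_d$ is triangular with unit diagonal, is precisely the argument being left to the reader. Nothing is missing; your handling of the $k=0$ case is a welcome touch of care.
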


\noindent Now, we consider all simple connected regions for which
the degree of the associated polynomial
$P_\mathcal{R}(x)=F_\mathcal{R}(x-1)$ is equal to $d$. Let
$\mathcal{F}^d$ denote the affine subspace of $\mathcal{P}^d$
spanned by these polynomials.
\begin{lem}\label{L:one is not}
The polynomial $P_{2d+1,d}$ is not contained in
$A_d(\mathcal{F}^d)$.
\end{lem}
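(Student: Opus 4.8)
The plan is to reduce the claim to a single non-membership relative to the \emph{previous} operator $A_{d-1}$ and then to separate the offending polynomial by an explicit linear functional.

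First I would use the interpretation of $P_{n,i}$ as an independence polynomial (Remark \ref{R:exact}); splitting independent sets according to whether the extra square is used yields the decomposition
\[
P_{n,i}=P_n+x\,P_{i-2}P_{n-i-1},\qquad P_0=P_{-1}=1,
\]
so that $P_{2d+1,d}=P_{2d+1}+x\,P_{d-2}P_d$. By Lemma \ref{L:0} we have $P_{2d+1}=A_d(P_{2d-1})$, and $P_{2d-1}\in\mathcal F^d$ because it is the polynomial of the ladder $G_{2d-1}$, whose degree is $d$. Suppose, for contradiction, that $P_{2d+1,d}=A_d(q)$ for some $q\in\mathcal F^d$. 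Subtracting gives $A_d(q-P_{2d-1})=x\,P_{d-2}P_d$. The recurrence $A_d(x^k)=x\,A_{d-1}(x^{k-1})$ of Definition \ref{D:opA} shows that $A_d$ carries the space of polynomials with zero constant term into $x\cdot A_{d-1}(\mathcal P^{d-1})$, while $A_d(1)$ has constant term $1$; comparing constant terms of the displayed identity (the right side has none) forces $q-P_{2d-1}$ to have zero constant term. Hence $x\,P_{d-2}P_d\in x\cdot A_{d-1}(\mathcal P^{d-1})$, and the whole statement collapses to proving $P_{d-2}P_d\notin A_{d-1}(\mathcal P^{d-1})$.

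Next I would locate the functional cutting out this image. By Proposition \ref{P:tacnoA_k}, $A_{d-1}(\mathcal P^{d-1})$ is the span of the $d$ polynomials $x^kB_{d-1-k}$ for $0\le k\le d-1$, where $B_m(y)=1+2y-y^2+\cdots+(-1)^mC_my^{m+1}$; since their lowest terms are $1,x,\dots,x^{d-1}$ they are independent and the image is a hyperplane in $\mathcal P^d$. A functional $\phi(g)=\sum_j w_jg_j$ kills it iff the weights solve the lower-triangular system $\sum_{i=0}^{r}w_{d-r+i}b_i=0$ $(1\le r\le d)$, where $b_i=[t^i]B(t)$ and $B(t)=1+2t-t^2+2t^3-\cdots=t+\tfrac{1+\sqrt{1+4t}}2$; setting $W(t)=\sum_r w_{d-r}t^r$ this says $W(t)\equiv 1/B(t)\pmod{t^{d+1}}$, and the Catalan identity
\[
\frac{2}{\,2t+1+\sqrt{1+4t}\,}=\sum_{r\ge0}(-1)^rC_{r+1}t^r
\]
gives $w_{d-r}=(-1)^rC_{r+1}$. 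Reading off $[y^n]\sqrt{1+4y}=-2(-1)^nC_{n-1}$ (for $n\ge1$) then repackages this in closed form as
\[
\phi(g)=\sum_j(-1)^{d-j}C_{d-j+1}\,g_j=-\tfrac12\,[y^{d+2}]\bigl(g(y)\sqrt{1+4y}\bigr),
\]
which by construction annihilates $A_{d-1}(\mathcal P^{d-1})$.

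Finally I would compute $\phi(P_{d-2}P_d)$ from the closed form $P_m=(\alpha^{m+2}-\beta^{m+2})/s$ with $s=\sqrt{1+4y}=\alpha-\beta$, $\alpha+\beta=1$, $\alpha\beta=-y$. A short manipulation gives
\[
P_{d-2}P_d\cdot s=\frac{\alpha^{2d+2}+\beta^{2d+2}-(-y)^d(1+2y)}{s},
\]
and extracting $[y^{d+2}]$: the first part splits as $P_{2d}$ (degree $d$, contributing $0$) plus a series of order $y^{2d+2}$ (contributing $0$ for $d\ge1$), while the second part gives $(-1)^d[y^2]\tfrac{1+2y}{s}=2(-1)^d$. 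Therefore $\phi(P_{d-2}P_d)=(-1)^d\ne0$, so $P_{d-2}P_d\notin A_{d-1}(\mathcal P^{d-1})$ and the lemma follows. The main obstacle is the middle step: correctly guessing the separating functional $\phi$ and proving it vanishes on the whole image $A_{d-1}(\mathcal P^{d-1})$. The reduction in the first paragraph is bookkeeping, and the evaluation in the last paragraph is a clean coefficient extraction once the $\alpha,\beta$ form is available; the delicate point on which the final sign $(-1)^d$ depends is the Catalan reciprocal identity $1/B(t)=\sum_{r\ge0}(-1)^rC_{r+1}t^r$, which I would verify by rationalizing $2/(2t+1+\sqrt{1+4t})$.
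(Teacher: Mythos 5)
Your proof is correct, but it takes a genuinely different route from the paper's. Both arguments subtract a known element of the image of $A_d$ and then show the residual cannot be hit, but the choice of base point changes the whole character of the argument. The paper subtracts $P_{2d+1,d-1}=A_d(P_{2d-1,d-1})$ (supplied by Lemma \ref{L:1}) and computes from (\ref{F:RRforh-vector_LEFT}) and (\ref{FFrrn_RIGHT}) that $P_{2d+1,d}-P_{2d+1,d-1}=(-1)^{d+1}(x^{d+1}+x^d)$; any $A_d$-preimage of $\pm(x^{d+1}+x^d)$ would need lowest term $\pm x^d$, yet $A_d(x^d)=x^d(1+2x)$, so Proposition \ref{P:tacnoA_k} finishes in one line. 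You subtract $P_{2d+1}=A_d(P_{2d-1})$ (Lemma \ref{L:0}) instead, which leaves the dense residual $xP_{d-2}P_d$, and you therefore need two ingredients the paper never requires: the closed form $P_{n,i}=P_n+xP_{i-2}P_{n-i-1}$, and an explicit description of the hyperplane $A_{d-1}(\mathcal{P}^{d-1})$ by the Catalan-weighted functional $\phi(g)=\sum_j(-1)^{d-j}C_{d-j+1}g_j=-\tfrac12[y^{d+2}]\bigl(g(y)\sqrt{1+4y}\bigr)$. I checked the pieces: the constant-term argument and the identity $A_d(xr)=xA_{d-1}(r)$ are sound, $1/B(t)=\sum_{r\ge0}(-1)^rC_{r+1}t^r$ is the correct reciprocal, $\phi$ annihilates each $x^kB_{d-1-k}$ (in your notation), and the Binet extraction indeed gives $\phi(P_{d-2}P_d)=(-1)^d\ne 0$; the small cases $d=1,2$ confirm it. What your route buys: reusable explicit data --- a linear equation cutting out the image of $A_{d-1}$ and a product formula for $P_{n,i}$ --- plus the formally stronger conclusion $P_{2d+1,d}\notin A_d(\mathcal{P}^d)$. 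What the paper's route buys: brevity, since its residual is a sparse binomial and only the two top coefficients of $A_d(x^d)$ ever matter.

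One caution on your first step: you justify $P_{n,i}=P_n+xP_{i-2}P_{n-i-1}$ by the multiset interpretation of Remark \ref{R:exact}, but read literally (two indistinguishable copies of $i$ that may be chosen together) that interpretation yields $P_n+x^2P_{i-2}P_{n-i-1}$, which is false. The correct conflict structure is that the added square excludes the ladder squares $i-1$, $i$ and $i+1$ and counts as a second, distinguishable choice. The safest justification is direct verification: your formula satisfies both recurrences (\ref{FFrrn_RIGHT}) and (\ref{F:RRforh-vector_LEFT}) and matches the base cases $P_{n,1}=P_n+xP_{n-2}$ and $P_{n,2}=P_n+xP_{n-3}$ recorded in the proof of Lemma \ref{L:1}, so it holds by induction on $i$. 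With that verification supplied in place of the appeal to the remark, your argument is complete.
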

\begin{proof} From (\ref{F:RRforh-vector_LEFT}) and (\ref{FFrrn_RIGHT})
we have that
$$P_{2d+1,d}-P_{2d+1,d-1}=(P_{2d,d-1}+xP_{2d-1,d-2})-
(P_{2d,d-1}+xP_{2d-1,d-1})=$$
$$-x(P_{2d-1,d-1}-P_{2d-1,d-2})=(-1)^{d+1}(x^{d+1}+x^d).$$
We know that $P_{2d+1,d-1}=A_d(P_{2d-1,d-1})$. If there exists a
polynomial $p\in \mathcal{F}^d$ such that $A_d(p)=P_{2d+1,d}$ then
we obtain
$$x^{d+1}+x^d=\pm A_d(p-P_{2d-1,d-1}),$$
which is impossible from Proposition \ref{P:tacnoA_k}.

\end{proof}

\begin{thm}
The polynomials $P_{2d-1},P_{2d},P_{2d-1,1},\ldots,P_{2d-1,d-1}$
are affinely independent in $\mathcal{F}^d$.
\end{thm}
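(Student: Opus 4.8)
The plan is to reduce the statement to an ordinary linear independence claim and then expose a triangular (staircase) structure coming from the three recurrences for the $h$-type polynomials. First I would record that every region polynomial satisfies $P_{\mathcal R}(0)=F_{\mathcal R}(-1)=1$ by \eqref{E:linf}, so that the whole affine subspace $\mathcal F^d$ lies in the $d$-dimensional affine hyperplane $\{p\in\mathcal P^d:p(0)=1\}$. Consequently the $d+1$ polynomials $P_{2d-1},P_{2d},P_{2d-1,1},\dots,P_{2d-1,d-1}$ are affinely independent if and only if the $d$ difference vectors $\Delta_0=P_{2d}-P_{2d-1}$ and $\Delta_i=P_{2d-1,i}-P_{2d-1}$ (for $1\le i\le d-1$) are linearly independent in the $d$-dimensional space $V=\langle x,x^2,\dots,x^d\rangle$ of polynomials with vanishing constant term; each $\Delta$ indeed has zero constant term, since all the polynomials take the value $1$ at $0$.

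Next I would put these differences into closed form. From $P_m=P_{m-1}+xP_{m-2}$, which is \eqref{F:RRforh-vector nolmal}, one gets at once $\Delta_0=P_{2d}-P_{2d-1}=xP_{2d-2}$. For the other differences I would pass to the one-step telescoping differences $D_i:=P_{2d-1,i}-P_{2d-1,i-1}$ (with the convention $P_{2d-1,0}=P_{2d-1}$), so that $\Delta_i=D_1+\cdots+D_i$. Using \eqref{F:RRforh-vector_LEFT} for $P_{n+2,i}$ and \eqref{FFrrn_RIGHT} for $P_{n+2,i-1}$ and subtracting gives the recursion $D_{n+2,i}=-x\,D_{n,i-1}$, while the base identity $P_{m,1}=P_m+xP_{m-2}$ (obtained exactly as in the proof of Lemma \ref{L:1}) gives $D_{m,1}=xP_{m-2}$. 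Unwinding the recursion yields the clean formula $D_i=(-1)^{i-1}x^{i}P_{2d-1-2i}$. Since $P_{2d-1-2i}=P_{2(d-i)-1}$ has degree $d-i$ with both constant and leading coefficient equal to $1$ by Remark \ref{R:exact}, each $D_i$ has lowest-degree term $(-1)^{i-1}x^{i}$ and top-degree term $(-1)^{i-1}x^{d}$.

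Finally I would prove independence by a triangular elimination. Because $\Delta_i=\sum_{j\le i}D_j$ is a unipotent change of basis, the family $\{\Delta_0,D_1,\dots,D_{d-1}\}$ spans the same subspace as $\{\Delta_0,\Delta_1,\dots,\Delta_{d-1}\}$. Setting $S_1=\Delta_0=xP_{2d-2}$ and $S_{k+1}=S_k+(-1)^kD_k$, the identity $P_m-P_{m-1}=xP_{m-2}$ forces the top blocks to telescope, giving $S_k=x^{k}P_{2d-2k}$ for every $k$; using $D_1,\dots,D_{d-1}$ one therefore reaches $S_d=x^{d}P_0=x^{d}$, as $P_0=1$. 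Thus the span contains $D_1,\dots,D_{d-1}$ together with $S_d$, whose lowest-degree terms are $x,x^2,\dots,x^{d-1}$ and $x^{d}$; having pairwise distinct orders, these $d$ polynomials are linearly independent and hence span all of $V$. It follows that $\{\Delta_0,\dots,\Delta_{d-1}\}$ is a basis of the $d$-dimensional space $V$, which is precisely the required linear independence. I expect the main obstacle to be the derivation of the closed form for $D_i$: one has to combine the two distinct recurrences \eqref{FFrrn_RIGHT} and \eqref{F:RRforh-vector_LEFT} for $P_{n+2,i}$ in the right way and track the alternating signs, after which the staircase elimination is routine.
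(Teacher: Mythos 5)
Your proof is correct, and it takes a genuinely different route from the paper's. The paper argues by induction on $d$: it constructs the linear map $A_d$ of Definition \ref{D:opA}, shows in Lemmas \ref{L:0} and \ref{L:1} that $A_{d-1}$ carries $P_{2d-3},P_{2d-2},P_{2d-3,j}$ to $P_{2d-1},P_{2d},P_{2d-1,j}$, uses the injectivity of $A_{d-1}$ (Corollary \ref{C:Ais11}, which rests on the Catalan-number formula of Proposition \ref{P:tacnoA_k}) to transport affine independence up one level, and finally invokes Lemma \ref{L:one is not} to conclude that the one new polynomial $P_{2d-1,d-1}$ escapes $A_{d-1}(\mathcal{F}^{d-1})$, hence the affine span of the others. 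You instead work entirely at level $d$, with no induction on the degree and no operator $A_d$: you reduce affine independence to linear independence of the differences, derive the closed forms $\Delta_0=xP_{2d-2}$ and $D_i=(-1)^{i-1}x^iP_{2d-1-2i}$ by playing \eqref{F:RRforh-vector_LEFT} against \eqref{FFrrn_RIGHT}, and finish with a staircase argument on lowest-degree terms, using the telescoping identity $S_k=x^kP_{2d-2k}$ to break the tie between $\Delta_0$ and $D_1$ (both of order $1$). I checked the recursion $D_{n+2,i}=-xD_{n,i-1}$, the base case $D_{m,1}=xP_{m-2}$, the telescoping, and the distinct-orders conclusion; all are sound. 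One small point deserves a remark: applying \eqref{F:RRforh-vector_LEFT} at $i=2$ requires the convention $P_{n,0}=P_n$, which is easily verified (alternatively, take both identities $P_{m,1}=P_m+xP_{m-2}$ and $P_{m,2}=P_m+xP_{m-3}$ from the proof of Lemma \ref{L:1} as base cases and run the recursion only for $i\geq 3$). Your route is more elementary and self-contained, needing only Proposition \ref{P:rrforf}, those base identities, and Remark \ref{R:exact}, and it actually proves slightly more: the $d$ differences form a basis of the space of polynomials vanishing at $0$, so the affine span of the $d+1$ polynomials is the entire hyperplane $\{p\in\mathcal{P}^d : p(0)=1\}$, which yields the paper's final corollary (that \eqref{E:linf} is the only linear relation) immediately. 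What the paper's longer route buys is the transfer map $A_d$ itself, which relates the tiling polynomials across consecutive degrees and has some independent interest beyond this single theorem.
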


\begin{proof}
We use induction on the degree. Assume that $d$ polynomials
$P_{2d-3}$, $P_{2d-2}$, $P_{2d-3,1}$, $\ldots,P_{2d-3,d-2}$ are
affinely independent in $\mathcal{F}^{d-1}$. From Lemmas \ref{L:0}
and \ref{L:1} and Corollary \ref{C:Ais11}, we conclude that
$P_{2d-1}$, $P_{2d}$, $P_{2d-1,1}$, $\ldots,P_{2d-1,d-2}$ are
affinely independent. These polynomials span a $(d-1)$-dimensional
affine subspace of $\mathcal{F}^d$. From Lemma \ref{L:one is not}
follows that $P_{2d-1,d-1}$ is not contained in
$A_{d-1}(\mathcal{F}^{d-1})$.

\end{proof}

\begin{cor}
The Euler-Poincare relation (\ref{E:linf}) is the only linear
relation for the $f$-vectors of tilings.
\end{cor}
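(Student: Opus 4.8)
The plan is to recast the statement as a dimension count in the polynomial space $\mathcal{P}^d$. The assignment $(f_0,\dots,f_d)\mapsto F_\mathcal{R}(x)=\sum_i f_i x^i\mapsto P_\mathcal{R}(x)=F_\mathcal{R}(x-1)$ is an invertible linear change of coordinates, so a linear relation among the numbers $f_i$ valid for every region of degree $d$ is the same thing as an affine-linear functional on $\mathcal{P}^d$ taking a constant value on every polynomial $P_\mathcal{R}$ arising from such a region. Hence it suffices to compute the affine span of the set of all these polynomials, that is, of $\mathcal{F}^d$, and to check that it has codimension $1$.

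First I would observe that the Euler--Poincar\'e relation (\ref{E:linf}) is precisely the condition $P_\mathcal{R}(0)=F_\mathcal{R}(-1)=\sum_i(-1)^i f_i=1$. Therefore every valid polynomial lies in the affine hyperplane $H=\{\,p\in\mathcal{P}^d : p(0)=1\,\}$, which has affine dimension $d$ inside the $(d+1)$-dimensional space $\mathcal{P}^d$. This yields the inclusion $\mathcal{F}^d\subseteq H$ and shows that (\ref{E:linf}) is genuinely one relation; the entire content of the corollary is to rule out any further independent relation.

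Next I would invoke the preceding theorem, which is exactly the engine needed here. It exhibits the $d+1$ polynomials $P_{2d-1},P_{2d},P_{2d-1,1},\dots,P_{2d-1,d-1}$, all lying in $\mathcal{F}^d$, as affinely independent. Since $d+1$ affinely independent points span an affine subspace of dimension $d$, the affine span of $\mathcal{F}^d$ has dimension at least $d$. Combining this with $\mathcal{F}^d\subseteq H$ and $\dim H=d$ forces the affine span of $\mathcal{F}^d$ to equal $H$.

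To finish, I would note that an affine functional on $\mathcal{P}^d$ constant on all of the hyperplane $H$ must have linear part proportional to $p\mapsto p(0)$; translating back through the coordinate change, every linear relation holding for all $f$-vectors is a scalar multiple of (\ref{E:linf}). The only non-routine ingredient is the affine independence supplied by the previous theorem, whose proof rests on Lemmas \ref{L:0}, \ref{L:1} and \ref{L:one is not} together with the injectivity of $A_d$ (Corollary \ref{C:Ais11}); granting that, the corollary is a pure dimension count, and the single point to handle carefully is the bookkeeping that a hyperplane of affine dimension $d$ is pinned down by exactly $d+1$ affinely independent points, which is precisely the number produced by the theorem.
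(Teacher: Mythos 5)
Your proposal is correct and matches the paper's (implicit) argument: the paper derives this corollary directly from the preceding theorem, whose $d+1$ affinely independent polynomials force the affine span of $\mathcal{F}^d$ to be the full hyperplane $\{p : p(0)=1\}$ cut out by (\ref{E:linf}), exactly the dimension count you spell out. Your write-up simply makes explicit the routine bookkeeping the paper leaves to the reader, including the correct observation that (\ref{E:linf}) translates to $P_\mathcal{R}(0)=1$ under the change of variables $P_\mathcal{R}(x)=F_\mathcal{R}(x-1)$.
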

This answer the question of Ehrenborg question about numerical
relations between the numbers of different types of tilings, see
\cite{EhrCUB}.

\bibliographystyle{amcjoucc}
\bibliography{amcexample}

\end{document}